\documentclass[review,11pt]{elsarticle}

\usepackage{amssymb}
\usepackage{amsmath}
\usepackage{setspace}
\usepackage{xcolor}
\usepackage{float}
\usepackage{array}
\usepackage{amsthm}
\usepackage{multirow}
\usepackage{graphicx}
\usepackage{mathtools}
\usepackage[normalem]{ulem}
\usepackage{cancel}
\newtheorem{theorem}{Theorem}[section]

\newtheorem{proposition}[theorem]{Proposition}

\usepackage[margin=1in]{geometry}
\newcommand{\ipekmodify}[1]{{\color{black} #1}}
\newcommand{\mathmodify}[1]{{\color{black} #1}}

\newcommand{\pmp}{PMP}
\newcommand{\ssc}{SSC}

\newcommand{\sscs}{SSC}
\newcommand{\pdc}{PD-Clustering}
\newcommand{\pdcp}{PD-Clustering Problem}

\newcommand{\fc}{Fuzzy Clustering}
\newcommand{\fcp}{Fuzzy Clustering Problem}
\newcommand{\fcs}{FC}
\newcommand{\fcm}{FCM}

\newcommand{\Gr}{$ \mathbf{G} $}
\newcommand{\Es}{$ \mathbf{E} $}
\newcommand{\Vs}{$ \mathbf{V} $}
\newcommand{\Xs}{$ \mathbf{X} $}
\newcommand{\Grev}{$ \mathbf{G}=(\mathbf{E},\mathbf{V}) $}
\newcommand{\dik}[2]{$d(v_#1,x_#2)$}
\newcommand{\pik}[2]{$p_{#1#2}$}
\newcommand{\vi}[1]{$v_{#1}$}
\newcommand{\xk}[1]{$x_{#1}$}
\newcommand{\vleft}[1]{{{v_i,\,v_j,\,v_k,\,i\neq j\neq k \in V_{left}}}}
\newcommand{\vright}[1]{{{v_i,\,v_j,\,v_k,\,i\neq j\neq k \in V_{right}}}}

\journal{Annals of Operations Research}

\begin{document}

\begin{frontmatter}



\title{A Foundational Perspective for Partitional Clustering on Networks}


\author[inst1]{Derya Ipek Eroglu}

\affiliation[inst1]{organization={Computing Sciences, SUNY Brockport},
            addressline={350 New Campus Drive}, 
            city={Brockport},
            postcode={14420}, 
            state={New York},
            country={USA}}

\author[inst2]{Cem Iyigun}

\affiliation[inst2]{organization={Industrial Engineering, Middle East Technical University},
            addressline={Üniversiteler Mahallesi, Dumlupınar Bulvarı No:1}, 
            postcode={06800}, 
            city={Ankara},
            country={Turkey}}

\begin{abstract}
This study presents a theoretical analysis of partitional clustering on networks, analyzing both hard and soft assignment schemes with different objective functions. Cluster centers are not restricted to vertices but can also be located along the edges. We examine four key models: P-Median (PMP) and Sum of Squares Clustering (SSC) under hard assignment, and Probabilistic Distance Clustering (PDC) and Fuzzy C-Means (FCM) under soft assignment. Through mathematical analysis, we uncover structural properties that differentiate these models, such as the significance of assignment bottleneck points and the role of vertex-restricted solutions in determining optimal cluster centers. Our findings reveal that, while SSC and FCM can yield optimal centers along edges, PMP and PDC inherently favor vertex placement, leading to insights into clustering behavior on networks. These insights offer new directions for designing efficient algorithms and have implications \ipekmodify{ranging from facility location and network design to clustering on the embedding graphs that power similarity search in modern retrieval systems}.

\end{abstract}

\begin{keyword}
Clustering on Networks\sep Partitional Clustering \sep Center-based Clustering \sep Arc Bottleneck Point\sep Assignment Bottleneck Point
\sep \ipekmodify{Shortest Path Distance}
\end{keyword}

\end{frontmatter}


\section{Introduction}
\label{sec:1}

Clustering is an unsupervised learning method that groups similar data points and separates dissimilar ones based on a defined distance metric \cite{hart1975}. It is widely used as an exploratory analysis technique \cite{jain1999data}, uncovering underlying patterns and structures within complex datasets \cite{ClusIntro}. Clustering has been extensively studied across various domains, with existing literature primarily categorized into two main types: partitional clustering and hierarchical clustering. Partitional clustering partitions data into distinct, non-overlapping subsets, while hierarchical clustering organizes data into nested structures based on hierarchical relationships.

While traditional clustering methods are well-developed for continuous spaces, many real-world datasets are inherently relational, best represented as networks. This necessitates clustering methods that account for graph connectivity and discrete relationships. Consequently, graph clustering has gained increasing attention, with applications in social networks, biological systems, and transportation networks \cite{OuYang2017}. \ipekmodify{More recently, the same network abstraction has become central to similarity search over high-dimensional embeddings, where navigable graphs~\cite{HNSW} underpin the retrieval components of modern generative AI systems~\cite{RAG}.} Recent advances in clustering have expanded beyond standard partitioning techniques to incorporate more flexible models, such as soft clustering \cite{GraphClus} and density-controlled clustering \cite{Tariq2023}, making clustering more adaptable to real-world constraints.

Despite these advancements, many existing graph clustering approaches prioritize partitioning strategies rather than optimizing cluster center locations, limiting their applicability in certain theoretical and practical settings \cite{GraphClus}. However, location theory—a field concerned with optimal facility placement in networks—provides a natural framework to extend clustering models by incorporating optimization techniques used in network design and logistics. Recent work on p-median and facility location models \cite{Zaferanieh2024, Mahmoudi2023} highlights the importance of optimizing cluster centers, particularly in applications such as wireless sensor networks, urban facility placement, and transportation planning. Furthermore, clustering methods that integrate traffic-aware constraints \cite{Zaferanieh2024} and user-specified density parameters \cite{Tariq2023} have demonstrated the growing need for models that balance theoretical rigor with practical applicability.

In this paper, we utilize a unified theoretical framework for analyzing partitional clustering in networks, focusing on two key aspects: assignment schemes and objective functions. We examine both hard clustering, where each data point is assigned to a single cluster, and soft clustering, which allows fractional assignments and enables partial memberships. Our focus on soft clustering connects directly to recent advances which  unravels additional benefits coming with using soft assignment \cite{Hurtado2019}. Additionally, we explore clustering models that minimize either the sum of distances or the sum of squared distances between nodes and their assigned cluster centers. These formulations align with recent work on density-aware clustering \cite{Tariq2023} and p-median problem \cite{Zaferanieh2024}, both of which highlight the importance of the problems of interest. By establishing theoretical connections between clustering methodologies and location theory, we analyze the behavior of optimal solutions in networks, providing fundamental insights into the structural properties of these models. This theoretical foundation bridges recent advances in graph clustering and facility location models, offering a novel lens for understanding clustering in networks.

The remainder of this paper is structured as follows: Section 2 formally defines the clustering problems under study. Section 3 introduces the theoretical framework supporting our analysis, while Section 4 presents our key theoretical results. Finally, Sections 5 and 6 discuss our contributions and outline potential directions for future research.

\section{Problem Definitions and Network Setting}
\label{sec:2}

Clustering has been extensively studied in planar setting, providing crucial insights into data partitioning strategies. In this paper, we focus on four clustering models, distinguished by two primary factors: the assignment type (hard vs.\ soft) and the objective function (sum of distances vs.\ sum of squared distances). Table \ref{tab:twobytwo} summarizes these variations and their corresponding objective functions.

Let $I$ be the set of data points and $K$ the set of clusters. In the planar setting, each data point is represented by coordinates $x_i \in \mathbb{R}^d$, while cluster centers are denoted $c_k \in \mathbb{R}^d$. The function $d(x_i, c_k)$ typically refers to the Euclidean distance between the data points and cluster centers. In the hard assignment setting, $C_k$ denotes cluster $k$.

\begin{table}[h!]
    \caption{Categorization of Clustering Problems by Assignment Type and Objective Function with Mathematical Formulations}
    \label{tab:twobytwo}
    \centering
    \small 
    \renewcommand{\arraystretch}{1.5} 
    \setlength{\tabcolsep}{6pt} 
    \begin{tabular}{c c c c}
        \hspace{5pt} & \hspace{5pt}  & \multicolumn{2}{c}{\textbf{Assignment Type}} \\ 
        & & \textbf{Hard Assignment} & \textbf{Soft Assignment} \\ \cline{3-4}
        
        \multirow{2}{*}{\rotatebox[origin=c]{90}{\textbf{Objective Function}}}
        & \rotatebox[origin=c]{90}{\textbf{   Distance   }} 
        & \multicolumn{1}{|c|}{\begin{tabular}{c}
            \rule{0pt}{30pt}
            \textbf{\ipekmodify{P-median}} \\[3pt]
            $\ipekmodify{\min \sum_{k \in K}\sum_{i \in C_k}{d(x_i,c_k)}}$ \\
            \rule{0pt}{20pt}
        \end{tabular}} & 
        \multicolumn{1}{c|}{\begin{tabular}{c}
        \rule{0pt}{30pt}
            \textbf{PD-Clustering} \\[3pt]
            $ \min \sum_{i \in I}\sum_{k \in K}{p_{ik}^2d(x_i,c_k)} $ \\
            \rule{0pt}{20pt}
        \end{tabular}} \\ \cline{3-4}
        
        & \rotatebox[origin=c]{90}{\textbf{Squared Distance  }} 
        & \multicolumn{1}{|c|}{\begin{tabular}{c}
            \textbf{K-Means} \\[3pt]
            $\min \sum_{k \in K}\sum_{i \in C_k}{d(x_i,c_k)^2}$
        \end{tabular}} & 
        \multicolumn{1}{c|}{\begin{tabular}{c}
            \textbf{Fuzzy C-Means} \\[3pt]
            $ \min \sum_{i \in I}\sum_{k \in K}{p_{ik}^md(x_i,c_k)^2}$ 
        \end{tabular}} \\ \cline{3-4}
    \end{tabular}
\end{table}

\paragraph{\bfseries Hard assignment} Each data point belongs to exactly one cluster. Two well-known hard-assignment algorithms are:
\begin{itemize}
    \item \textbf{\ipekmodify{P-median}}, which minimizes the sum of distances between data points and their respective cluster centers. It is known for its robustness to outliers.
    \item \textbf{K-Means}, which minimizes the sum of squared distances to cluster centroids, emphasizing the reduction of within-cluster variance.
\end{itemize}

\paragraph{\bfseries Soft assignment} A data point can belong to multiple clusters with fractional memberships $p_{ik}$. Two prominent examples are:
\begin{itemize}
    \item \textbf{Fuzzy C-Means (\fcm)}~\cite{Fuzzy1}, which uses a fuzziness parameter $m$. As $m$ increases, cluster boundaries become more blurred, and membership values $p_{ik}$ approach a uniform distribution~\cite{FuzzySet}.
    \item \textbf{Probabilistic Distance Clustering (\pdc)}~\cite{PD1}, which also allows fractional memberships but minimizes the sum of distances, aligning it more closely with P-Median in terms of the objective function.
\end{itemize}

\ipekmodify{These four problems appear under different names across research communities, and we map the correspondence here to ease navigation. The distance-based hard assignment problem is known as K-Median in the machine learning and theoretical computer science literature, and as K-Medoids when cluster centers are restricted to data points; the widely used Partitioning Around Medoids (PAM) method of Kaufman and Rousseeuw~\cite{MedoBook} is an algorithm for the latter problem, although the two names are often used interchangeably in applied work. The squared-distance counterpart is the minimum sum-of-squares clustering problem, for which K-Means~\cite{Kmeans1} is the standard algorithm. In the location science literature, the network version of the distance-based problem is the P-Median Problem (\pmp). One distinction matters throughout this paper: K-Medoids and PAM restrict centers to data points, whereas the four problems we study allow centers anywhere on the network, at vertices or in the interior of edges. Whether this continuous relaxation changes the optimal solution is precisely the question we investigate.}

Although these formulations provide a strong theoretical foundation in planar contexts, many real-world datasets are naturally represented as networks. In such graph-based settings, the vertices represent the data points, and the edges define pairwise relationships. Hence, clustering on networks requires shortest path distances rather than Euclidean distances. As can be inferred from the comprehensive surveys~\cite{GraphClus,GraphClus2}, many network clustering approaches focus on partitioning the graph but do not explicitly address the problem of optimally locating cluster centers.

A key exception is the well-studied \pmp, an example of hard assignment on a network, which seeks to minimize the sum of shortest path distances between each \textbf{vertex(node)} and its assigned \textbf{center}. Hakimi's pioneering results~\cite{H1,H22} showed that optimal centers for the \pmp\ lie at vertices. Levy~\cite{L1} generalized this result and found that the root of this behavior is the objective function concavity. Building on Hakimi’s foundational insights ~\cite{H1,H22} and Levy’s extension~\cite{L1}, we introduce a unified framework that not only leverages these properties but also offers new perspectives, proofs, and a comparative treatment of four partitional clustering problems considering different assignment schemes using different objective functions. These foundational insights motivate to investigate different clustering problems to analyze the behavior of the objective functions and derive the structural properties of selecting clusters centers for each underlying problem. So the contribution of this work can be given in four folds : 
\begin{itemize}
\item [\textbf{{(i)}}] to present a unified theoretical framework for analyzing partitional clustering in networks, focusing on two key aspects: assignment schemes and objective functions,
\item [\textbf{(ii)}] to analyze the behavior of the objective function of the clustering problem under different settings,
\item [\textbf{(iii)}] to derive the structural properties of the cluster centers, and 
\item [\textbf{(iv)}] to lead developing efficient algorithms for solving the clustering problems with using the structural properties of the problem.  
\end{itemize}

To adapt the four clustering models given in Table~\ref{tab:twobytwo} to a network, we denote vertices as $v_i$ and restrict each cluster center $x_k$ to be on the network (where it can be at the nodes or on the edges). The distance function $d(v_i, x_k)$ then refers to the shortest path distance between vertex $v_i$ and the center $x_k$. In the sections that follow, we analyze how these four models behave in networks, highlighting both shared and distinctive structural properties.

\section{Fundamentals and Background}

In this section we introduce the basic notation and some concepts that will be used in the analysis of the models.

The clustering problems of interest are defined on an undirected, connected graph \Grev, where \Vs\ denotes the set of vertices and \Es\ represents the set of edges. Since \Gr\ is connected, it follows that $|\mathbf{E}| \geq |\mathbf{V}| - 1$, ensuring that every vertex is reachable. 

On this graph, we define \dik{i}{k} as the \textbf{shortest path distance} between vertex \vi{i} and cluster center \xk{k}. Since \Gr\ is undirected, it follows that \dik{i}{k} = $d(x_k, v_i)$. 
\ipekmodify{Throughout this study, the analysis is conducted with respect to this shortest path distance, induced by strictly positive edge lengths. The edge lengths themselves may be derived from any application-relevant measure, such as the Euclidean distance between endpoints, travel time, or cost. Our results rely only on the metric properties of the induced network distance, in particular the triangle inequality, and not on any property specific to Euclidean geometry. The squared-distance objectives in Table~\ref{tab:twobytwo} are defined as squares of this same network distance.} Table \ref{tab:notHardAss} summarizes the notation used throughout the paper.

\begin{table}[h]
	\caption{Table of Notations}
	\label{tab:notHardAss}
	\centering
	\begin{tabular}{l l} 
		\hline
		\Vs & Set of vertices \\
		\Xs & Set of cluster centers \\
            $ \mathbf{I} $ & Index set of vertices $ I=\{1,2,...,N\} $\\
		$N$ &  Number of vertices, $ |\mathbf{V}| $ \\
            $P$ & Number of clusters, $|\mathbf{X}|$ \\
		$ v_i $ & Vertex i, where $ i\in\mathbf{I}  $		\\
		$h_{i}$ & Weight of $v_i$, where ($h_{i}>0$, $ \forall i\in \mathbf{I}$ ) \\ 
		$ b_i $ & Arc bottleneck point of $ v_i $ on the edge $ (v_p,v_q) $\\
		$ a_i $ & Assignment bottleneck point of $ v_i $ \\	
		\xk{k} & Location of cluster center $k$ \\
		\dik{i}{k} & Length of the shortest path from \vi{i} to $x_k$ \\
		\pik{i}{k} & Membership probability of assignment of \vi{i} to cluster $k$ \\
		\hline
	\end{tabular}
\end{table}

By incorporating these notations, we extend the clustering models from Table \ref{tab:twobytwo} to networks by adding the center location constraint and updating the distance function. We next establish two key concepts—arc bottleneck points and assignment bottleneck points—to characterize the solution behavior on networks.

\subsection{Arc Bottleneck Point}

Let \vi{i} be an arbitrary vertex in \Gr, and let $e_{pq}$ be an edge connecting vertices $v_p$ and $v_q$ with length $l_e$. For any point $x \in e_{pq}$, the shortest path distance from \vi{i} to $x$ is given by:

\begin{equation}
d(v_i,x)=\min\{d(v_i,v_p)+d(v_p,x),d(v_i,v_q)+d(v_q,x)\}.
\end{equation}
\begin{figure}
	\centering
	\includegraphics[width=0.4\linewidth]{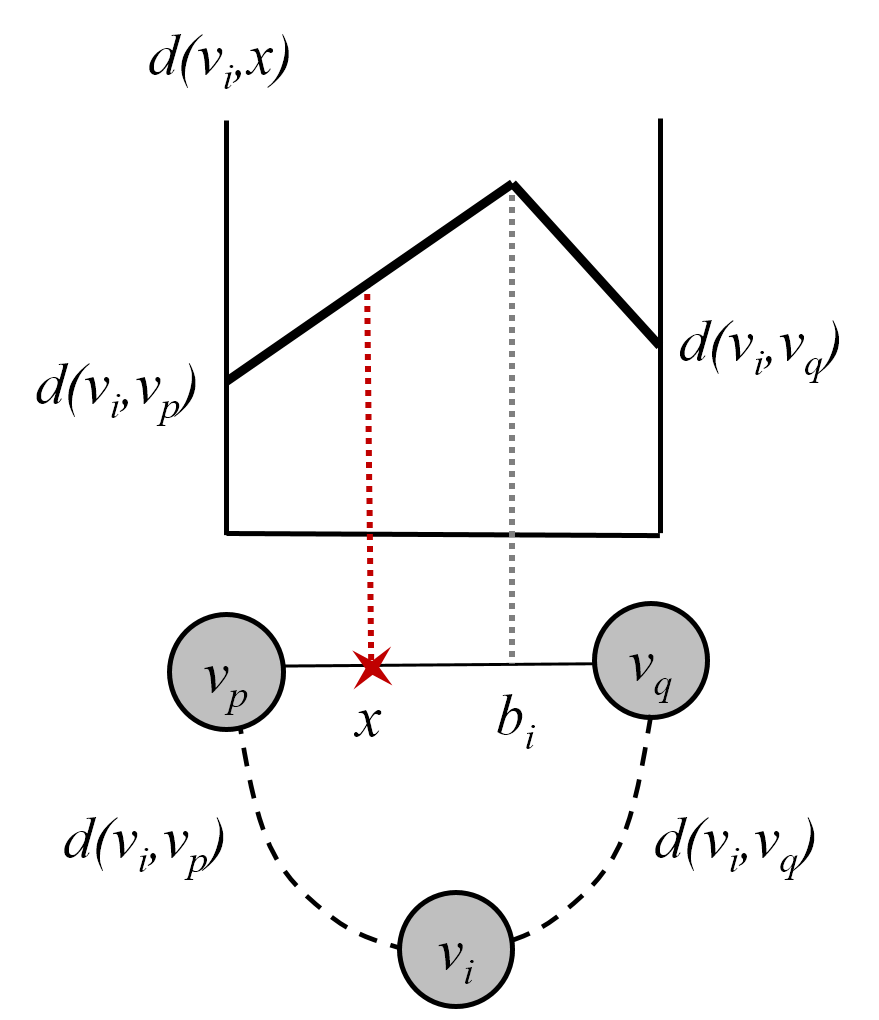}
	\caption{Distance function \dik{i}{} on the edge connecting \vi{p} and \vi{q}}
	\label{fig:arcbottleneck}
\end{figure}

This formulation implies that the shortest path to $x$ passes through either $v_p$ or $v_q$. Figure \ref{fig:arcbottleneck} illustrates this relationship. There exists a point, $b_i$, on edge $e_{pq}$ at which the shortest paths via $v_p$ and $v_q$ are of equal length. This point, called as \textbf{arc bottleneck point}, represents the farthest location on $e_{pq}$ from \vi{i} that maintains the shortest path property. The location of $b_i$ is determined by:

\begin{equation*}
d(v_i,v_p)+d(v_p,b_i)=d(v_i,v_q)+d(v_q,b_i) 
\end{equation*}

Substituting $d(v_q,b_i) = l_e - d(v_p,b_i)$, we obtain:

\begin{align}
d(v_i,v_p)+d(v_p,b_i)=&d(v_i,v_q)+l_e-d(v_p,b_i) \nonumber \\
d(v_p,b_i)=& \frac{1}{2} ( d(v_i,v_q)+l_e-d(v_i,v_p) ) \label{eqn:arcBot},
\end{align}

The arc bottleneck point $b_i$ has been studied in facility location problems. Before the study in \cite{FDS1}, Hakimi used this concept in his proof in \cite{H1}. There are three cases regarding the value of $d(v_p,b_i)$:

\begin{itemize}
	\item [\textbf{Case 1}]. If $ d(v_p,b_i) \leq0 $, the shortest path from \vi{i} to $x$ always passes from \vi{q}.
	\item [\textbf{Case 2}]. If $ d(v_p,b_i) \geq l_e $, the shortest path from \vi{i} to $x$ always passes from \vi{p}.
	\item [\textbf{Case 3}]. If $ d(v_p,b_i) \in (0,l_e) $, the shortest path from \vi{i} to $x$ passes from:
	\begin{itemize}
		\item \vi{p} if $d(v_p,x)<=d(v_p,b_i)$,
		\item \vi{q} if $d(v_p,x)>d(v_p,b_i)$.
	\end{itemize}
\end{itemize}

For cases with multiple vertices, additional bottleneck points can emerge, as illustrated in Figure \ref{fig:arcbottleneck2}. Since end vertices do not create bottleneck points, the number of arc bottleneck points on an edge is at most \ipekmodify{$N-2$}.

\begin{figure}
	\centering
	\includegraphics[width=0.45\linewidth]{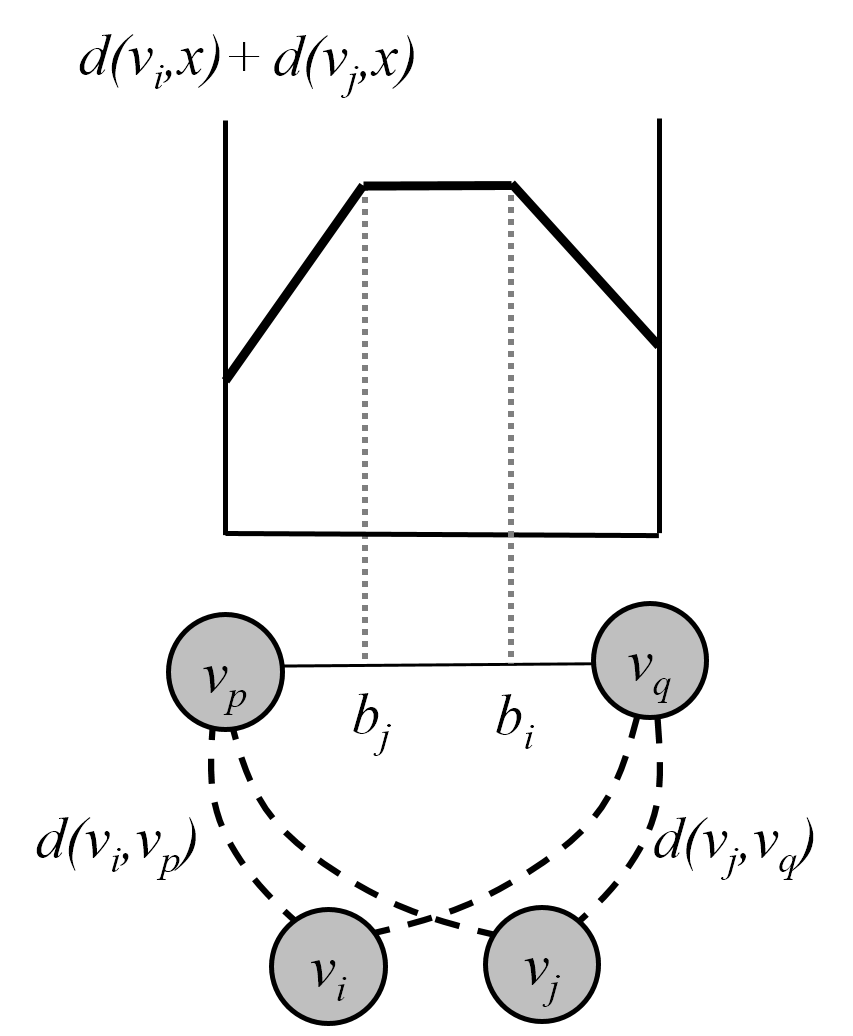}
	\caption{Distance function $d(v_i,x)+d(v_j,x)$ on the edge connecting $v_p$ and $v_q$}
	\label{fig:arcbottleneck2}
\end{figure}

\subsection{Assignment Bottleneck Point}

While arc bottleneck points capture the farthest shortest-path location on an edge, they do not account for cluster assignment changes when centers move. We introduce the assignment bottleneck point, which identifies the location where a vertex switches its nearest cluster center.

Let \vi{i} be an arbitrary vertex in \Gr, and let \xk{k} $\in \mathbf{E}, k=1,...,P$ be the closest center to \vi{i}. Assume that \xk{l} $\in$ \Gr\ is the second-closest center to \vi{i}, and that all cluster center locations except \xk{k} remain fixed. As \xk{k} moves along $e_{pq}$, its distance to \vi{i} changes, and at a critical point, \xk{l} may become closer than \xk{k}. The location where this assignment shift occurs is defined as the assignment bottleneck point. Figure \ref{fig:assbottleneck} illustrates an example where an assignment bottleneck point $a_i$ appears. Initially, \dik{i}{k} = 13 and \dik{i}{l} = 15, with \xk{k} as the closest center. As \xk{k} moves toward \vi{p}, its distance to \vi{i} increases. If \xk{k} moves more than 2 units, the assignment changes to \xk{l}, as \dik{i}{k} $\geq$ \dik{i}{l}.

\begin{figure}
	\centering
	\includegraphics[width=0.45\linewidth]{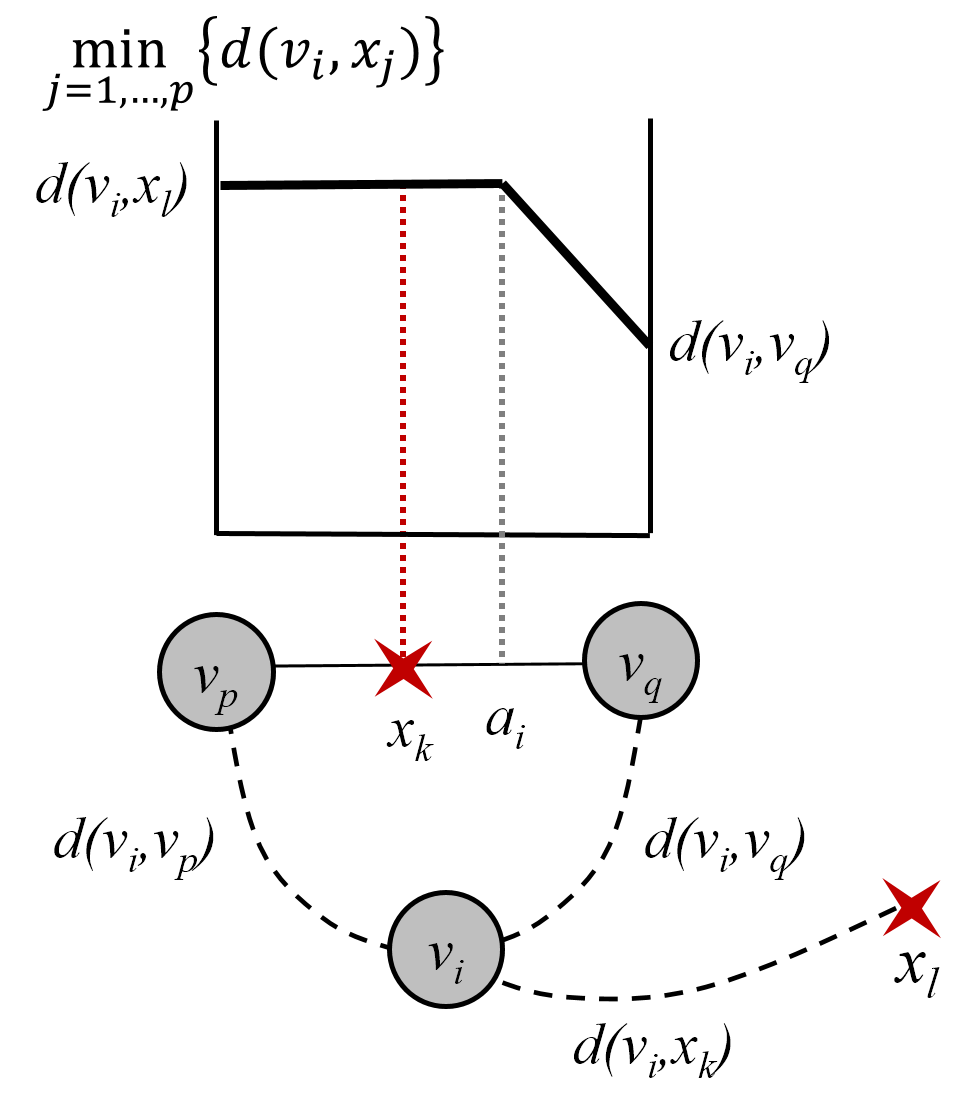}
	\caption{Assignment Bottleneck Point}
	\label{fig:assbottleneck}
\end{figure}

If an arc bottleneck point exists on $e_{pq}$, multiple assignment bottleneck points may arise. Figure \ref{fig:assbottleneck2} illustrates a scenario where:
1. The assignment switches at $a_i^q$, making \xk{l} the closest center.
2. The assignment switches back at $a_i^p$, reassigning \xk{k} as the closest center.

\ipekmodify{
\begin{proposition}\label{prop:abpbound}
Let $e_{pq}$ be an edge of length $l_e$ that realizes the shortest path distance between its end vertices, that is, $d(v_p,v_q)=l_e$. When a single cluster center moves along $e_{pq}$ while all other centers remain fixed, each vertex $v_i \in \mathbf{V} \setminus \{v_p,v_q\}$ creates at most two assignment bottleneck points on $e_{pq}$, and it creates two only if it has an arc bottleneck point in the interior of $e_{pq}$. Each end vertex creates at most one. Consequently, $e_{pq}$ contains at most $2(N-2)+2$ assignment bottleneck points.
\end{proposition}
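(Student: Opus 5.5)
Parametrize the moving center as $x(t)$, $t\in[0,l_e]$, with $t=d(v_p,x)$, and fix a vertex $v_i$. Let $D_i=\min_{l\neq k} d(v_i,x_l)$ be the distance from $v_i$ to the nearest of the fixed centers; it does not depend on $t$. An assignment bottleneck point of $v_i$ is a value of $t$ at which the sign of $f_i(t)=d(v_i,x(t))-D_i$ changes, so we count sign changes of $f_i$ on $[0,l_e]$. By the formula $d(v_i,x)=\min\{d(v_i,v_p)+t,\; d(v_i,v_q)+l_e-t\}$, $f_i$ is the minimum of an increasing and a decreasing affine function of slope $\pm1$. It is therefore piecewise linear and concave, with at most one breakpoint, namely the arc bottleneck point $b_i$ located by \eqref{eqn:arcBot}.

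The key step is the following elementary fact: a continuous concave function on an interval has a convex superlevel set $\{t: f_i(t)\ge 0\}$, which is a single subinterval. Hence $f_i$ changes sign at most twice. Moreover, if $b_i$ is not in the interior of $e_{pq}$ (Cases 1 and 2), then $f_i$ is affine with slope $\pm1$ on the whole edge and changes sign at most once. This gives both claims for $v_i\in\mathbf{V}\setminus\{v_p,v_q\}$: there are at most two bottleneck points, and two only when Case 3 holds.

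For an end vertex, say $v_p$, use the hypothesis $d(v_p,v_q)=l_e$. Then $d(v_p,x(t))=\min\{t,\; l_e+l_e-t\}=t$ on the edge, so the distance is strictly increasing and affine and crosses $D_p$ at most once. The symmetric argument applies to $v_q$, whose distance is $l_e-t$. This is exactly where the shortest-path hypothesis on $e_{pq}$ is needed: without it an end vertex could have an interior breakpoint.

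Summing over the vertices gives $2(N-2)+1+1=2(N-2)+2$. The main obstacle is definitional: deciding how to count ties, where $f_i\equiv 0$ on a segment or $f_i$ touches zero without changing sign. I would handle this by defining bottleneck points as boundary points of the set $\{t: f_i(t)<0\}$ relative to $[0,l_e]$. Since that set is the complement of an interval within an interval, it has at most two such boundary points in general and at most one in the affine case. I would also add a remark that if $v_i$ is assigned to neither center, nothing changes.
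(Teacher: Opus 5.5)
Your proposal is correct and follows essentially the same argument as the paper. You parametrize the edge by $t$, write $d(v_i,x(t))$ as the minimum of two slope-$\pm 1$ affine functions with at most one breakpoint $b_i$, and bound the crossings of the constant level $D_i$ by two; you get this bound from concavity, the paper from the single interior maximum. You then use $d(v_p,v_q)=l_e$ to make the end-vertex distances monotone. Your definition of $D_i$ as the distance to the nearest fixed center and your tie-counting convention only tighten the paper's wording, and $f_i\equiv 0$ on a segment cannot occur anyway because the slopes are $\pm 1$.
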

\begin{proof}
Let $t \in [0,l_e]$ denote the position of the moving center $x_k$ on $e_{pq}$, so that $d(v_i,x_k)=\min\{d(v_i,v_p)+t,\, d(v_i,v_q)+l_e-t\}$. As a function of $t$, this distance is piecewise linear with slopes $+1$ and $-1$ and at most one breakpoint, the arc bottleneck point $b_i$; it is increasing before $b_i$ and decreasing after. Since only $x_k$ moves, the distance from $v_i$ to its second-closest center is constant, and the assignment bottleneck points of $v_i$ are exactly the positions at which $d(v_i,x_k)$ crosses this constant level. A piecewise linear function with at most one interior maximum crosses a constant level at most twice, and twice only if the interior maximum exists, that is, only if $b_i$ lies in the interior of $e_{pq}$. For the end vertex $v_p$, the assumption $d(v_p,v_q)=l_e$ gives $d(v_p,x_k)=\min\{t,\, d(v_p,v_q)+l_e-t\}=t$, a monotone function, which crosses a constant level at most once; the same argument applies to $v_q$. Summing over the $N-2$ remaining vertices and the two end vertices yields the bound.
\end{proof}
}

\begin{figure}
	\centering
	\includegraphics[width=0.45\linewidth]{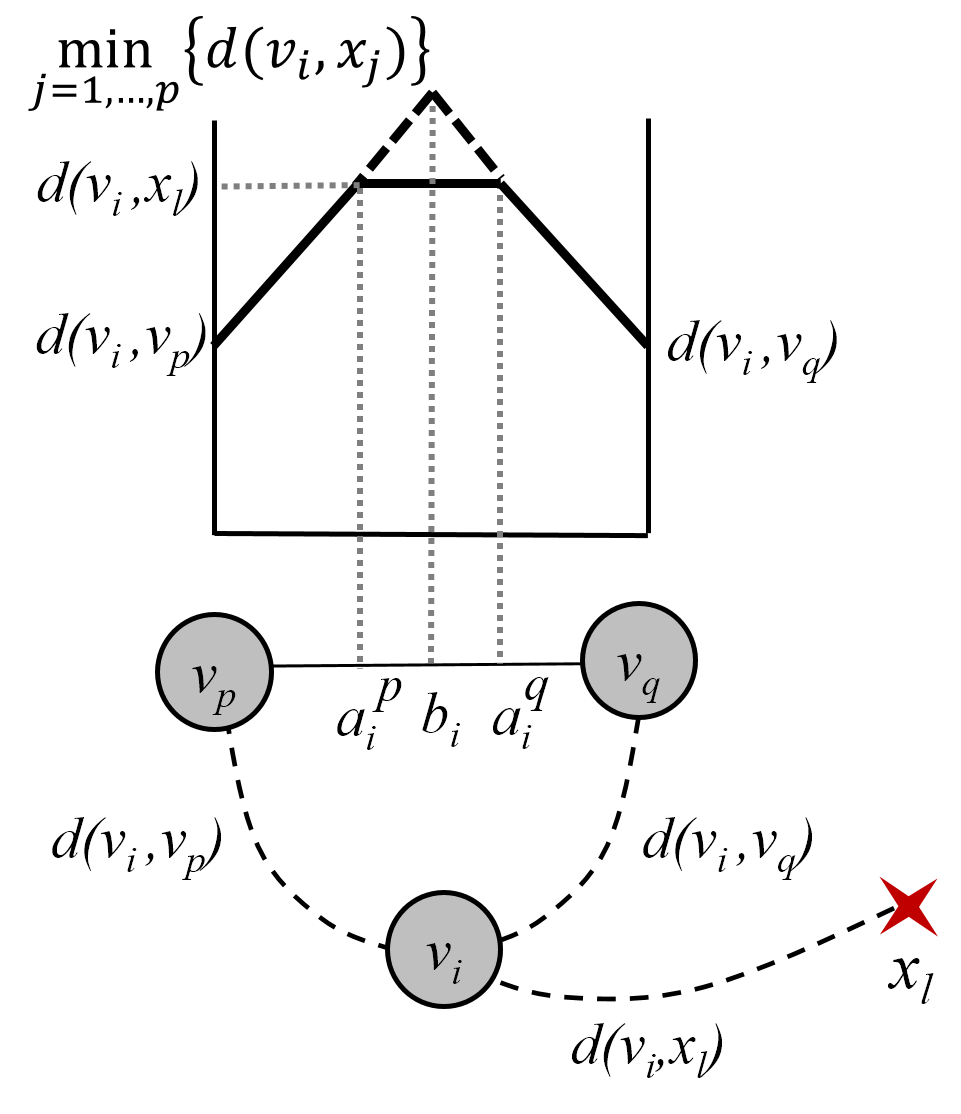}
	\caption{Assignment Bottleneck Point with Arc Bottleneck Point}
	\label{fig:assbottleneck2}
\end{figure}

\ipekmodify{
Having discussed the fundamental definitions for both hard and soft assignment clustering in network settings, next section focuses on the core theoretical aspects of these models. We analyze the objective function behavior of all four problems and establish the theorems that underpin our analytical results. This framework will clarify how and why optimal solutions tend to concentrate on specific locations of \Gr, ultimately revealing deeper insights into the nature of cluster center placement in networks.
}
\section{Structural Insights and Analytical Results}

In this section, we conduct our analysis under hard and soft assignment schemes and different objective functions as described in Table \ref{tab:twobytwo}.

\subsection{Hard Assignment Problems}

In the hard assignment case, each vertex is assigned to a single cluster, determined by its closest cluster center. Two key hard clustering problems are examined in this section: the \pmp, which minimizes the sum of distances between vertices and their assigned cluster centers, and the \sscs, which minimizes the sum of squared distances. We begin with the \pmp\ and extend the discussion to the \sscs, analyzing their theoretical properties in a network setting.

\subsubsection{P-Median Problem}
The \pmp\ is formally defined as
\begin{align*}
	\text{\textit{minimize} }&   
        f(\mathbf{X})=\sum_{k=1}^P\sum_{i \in C_k} h_i{d(v_i,x_k)} \\
	&\text{subject to } \nonumber \\
	&x_k \in \mathbf{G} \quad \forall \, k=1,...,P,
\end{align*} 

where \Xs\ represents the set of cluster centers, \xk{k} is the decision variable for the location of center $k$, and $h_i$ is a nonnegative constant weight associated with vertex \vi{i}. Since hard assignment is imposed, each vertex is exclusively assigned to one cluster, and only the distance to its closest center contributes to the objective function. We first analyze the case with a single cluster (the 1-Median Problem) and subsequently extend the results to the general \pmp.

\paragraph{\bfseries 1-Median Problem}\label{ss:pmedSS}
Suppose we have only one cluster and one cluster center will be located on \Gr. In that case, the formulation is
\begin{align}\label{eqn:pMed1obj}
\text{\textit{minimize}  }&  f(x_c)=\sum_{i=1}^N h_id(v_i,x_c)\\
&\text{subject to } \nonumber \\
&x_c \in \mathbf{G}.
\end{align}

Consider a simple line graph with four vertices as in Figure \ref{fig:graphline4}. If the cluster center is located anywhere on the graph, the objective function remains constant. This implies that the optimal center location could be on either a vertex or an edge.

\begin{equation*}
f=(a+y)+y+(l-y)+(b+l-y)=a+2l+b,
\end{equation*}

\begin{figure}[htbp]
	\centering
	\includegraphics[height=2cm]{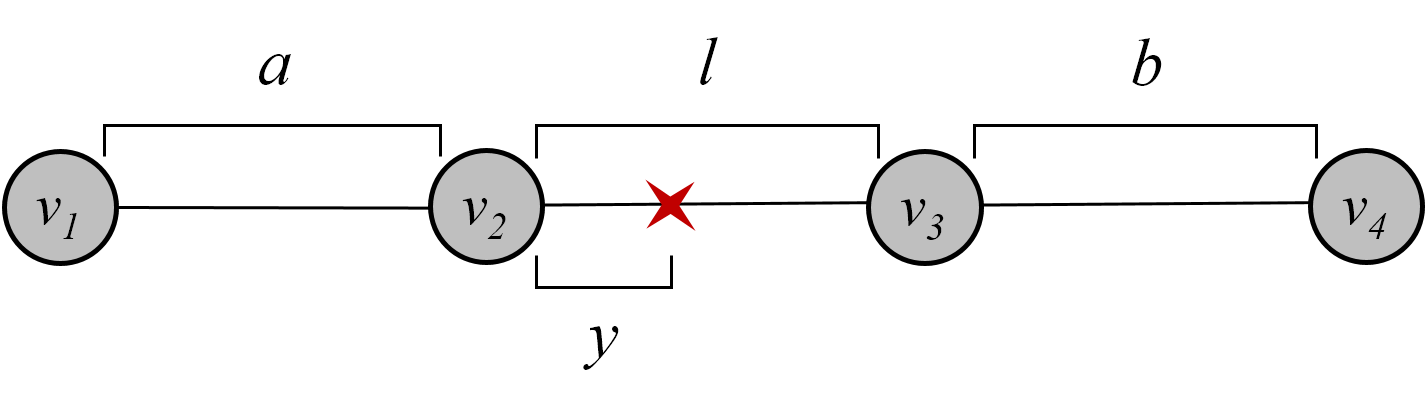}
	\caption{A Line Graph with 4 vertices}
	\label{fig:graphline4}
\end{figure}
\begin{figure}[htbp]
\centering
\includegraphics[width=0.4\linewidth]{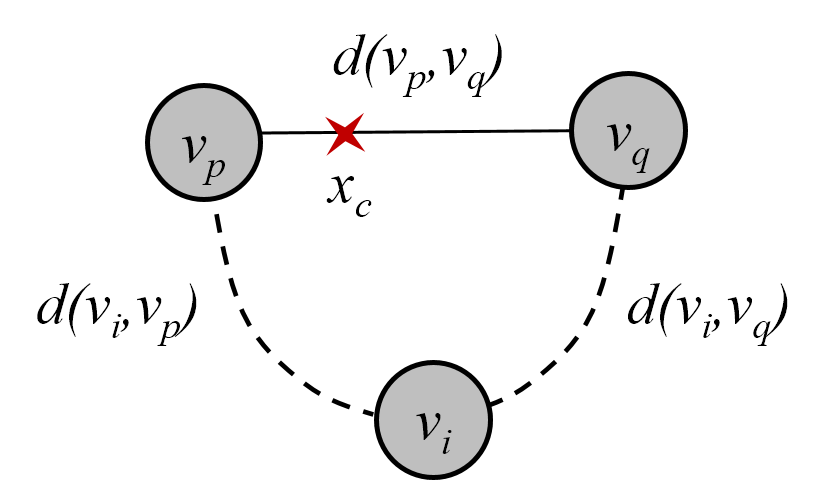}
\caption{An illustration of a part of a graph \Gr}
\label{fig:grgeneral}
\end{figure}
\begin{figure}[htbp]
	\centering
	\includegraphics[width=1\linewidth]{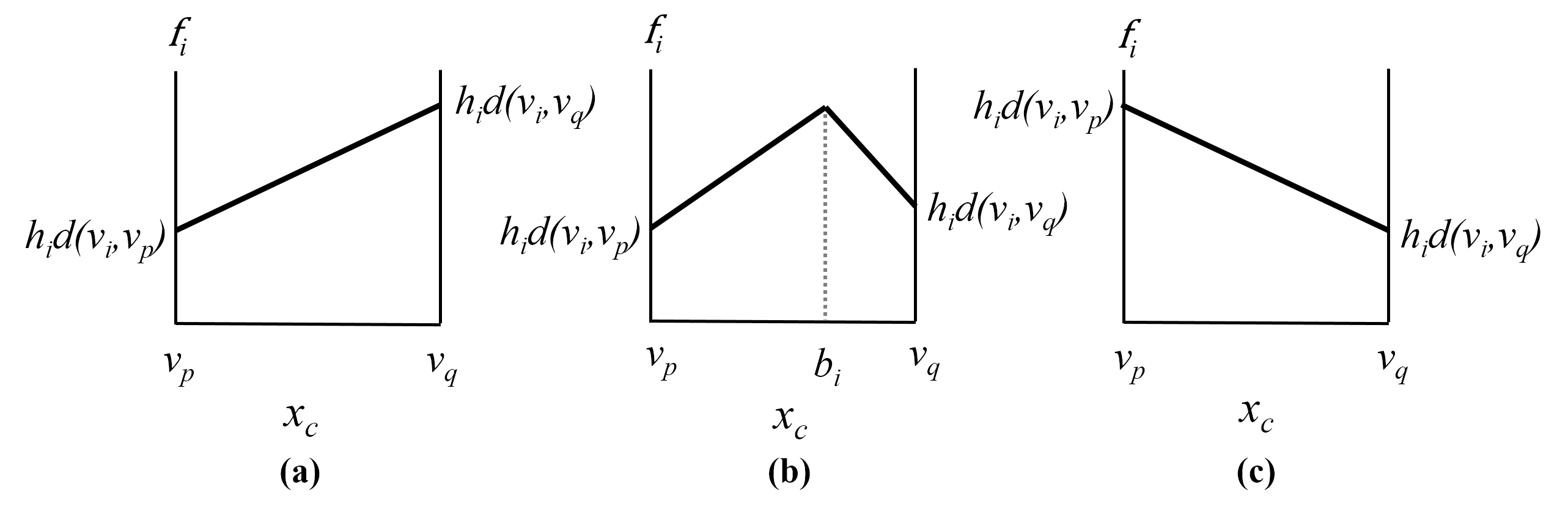}
	\caption{Objective function component for \vi{i} (denoted as $f_i$) when \xk{c} is moved along the edge $(v_p,v_q)$}
	\label{fig:pmed1obj}
\end{figure}

A general network is illustrated in Figure \ref{fig:grgeneral} in which cluster center \xk{c} is on an edge $e_{pq}$ connecting $(v_p,v_q)$. In the case of this network, we may observe three different patterns of objective function component of a vertex \vi{i} to the \eqref{eqn:pMed1obj} depending on location of \xk{c}. The shortest path from \vi{i} to \xk{c} may pass through \vi{p} or \vi{q} regardless of the location of \xk{c}, which also means that there is no arc bottleneck point. If there is an arc bottleneck point on $e_{pq}$, the shortest path to \xk{c} will pass from \vi{p} or \vi{q} depending on the location of \xk{c} on $e_{pq}$. These patterns are shown in Figure \ref{fig:pmed1obj}. In the Figure, (a) is the case when shortest path to \xk{c} passes from \vi{p}, and (c) is the case when shortest path to \xk{c} passes through \vi{q}. In both cases, there is no arc bottleneck point. In (b), if \xk{c} is in the interval $[v_p,b_i]$, shortest path to \xk{c} passes through \vi{p}; otherwise, shortest path to \xk{c} passes through \vi{q}. The reason of this behavior is the bottleneck point $b_i$ observed. In all of these cases, it could be observed that the objective function is linear or piecewise concave.

Using this structure, Hakimi \cite{H1} proved that the optimal center locations always lie in \Vs. Levy \cite{L1} extended this proof by showing that this result is a consequence of the concavity of the objective function. Since the summation of concave functions remains concave, the optimal solution to the 1-Median problem is always located at a vertex.

\paragraph{\bfseries P-Median Problem}\label{ss:pmedMS}

In \pmp, vertices are assigned to the clusters with the closest cluster center minimizing the p-median objective function in Table \ref{tab:twobytwo}.
 For the general \pmp, Hakimi \cite{H22} extended his previous proof and showed that the optimal cluster centers are always located in \Vs. His proof follows from decomposing the problem into $P$ separate 1-Median problems and showing that each center must be positioned at a vertex.

An alternative approach is to analyze the behavior of the objective function along the edges. Consider a network with \textit{P} cluster centers, where \xk{c} and \xk{k} are the closest and second-closest cluster centers to \vi{i}, respectively. If \xk{c} is moved along an edge $(v_p,v_q)$, the assignment of \vi{i} to a cluster may change. These assignment changes occur at assignment bottleneck points, as shown in Figure \ref{fig:pmedpobj}.

\begin{figure}[htbp]
	\centering
	\includegraphics[width=0.4\linewidth]{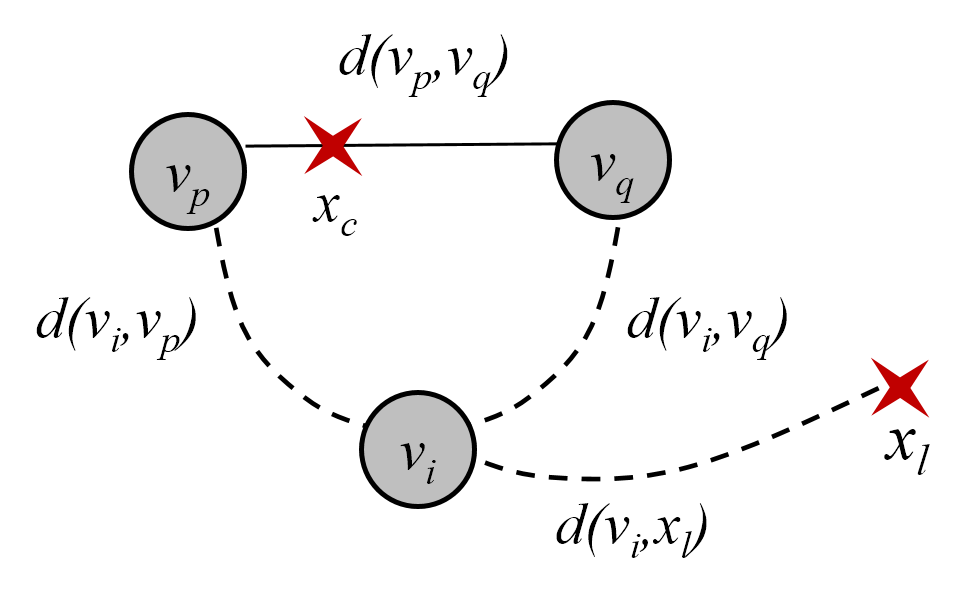}
	\caption{A visualization of a part of graph \Gr\, with 2 closest cluster centers}
	\label{fig:grgeneralass}
\end{figure}
Even with multiple centers, the objective function remains piecewise concave due to the presence of arc bottleneck points and assignment bottleneck points. Since the sum of concave functions is also concave, the optimal centers are always located in \Vs, as proved in \cite{L1}.
\begin{figure}[htbp]
	\centering
	\includegraphics[width=1\linewidth]{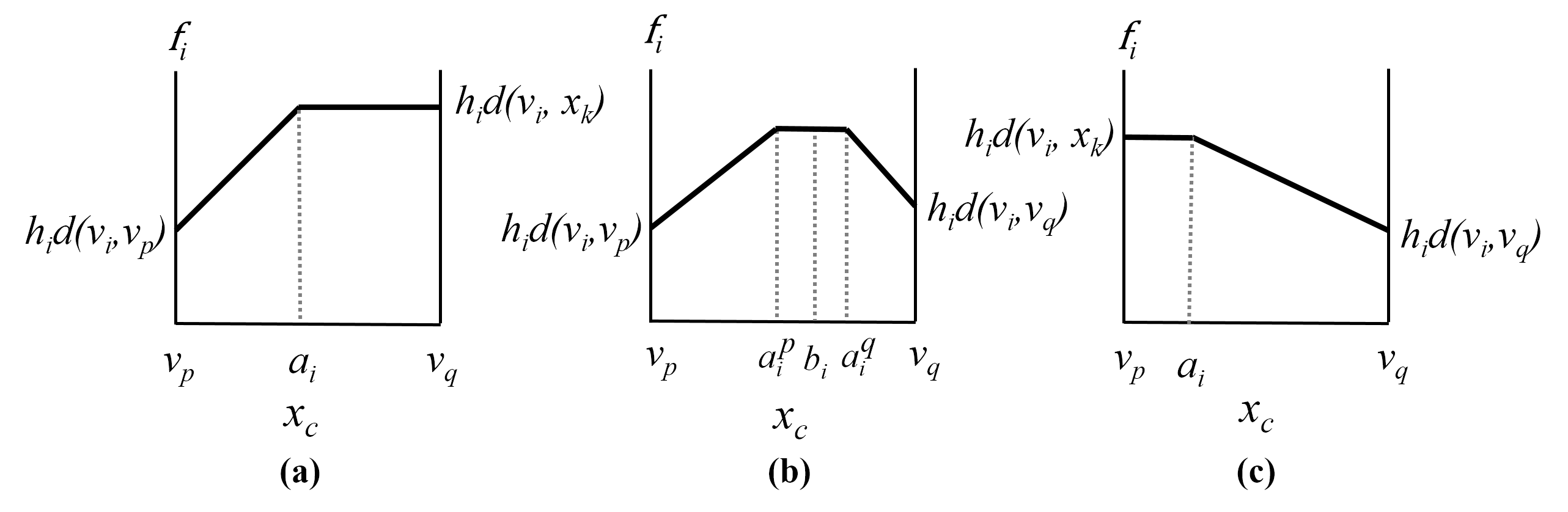}
	\caption{Objective function component for \vi{i} (denoted as $f_i$) when \xk{c} is moved along the edge $(v_p,v_q)$ and \xk{k} is the second closest cluster center to \vi{i}}
	\label{fig:pmedpobj}
\end{figure}

\subsubsection{Sum of Squares Clustering (\sscs) Problem on Networks}
\sscs\ problem differs from \pmp\ in that it uses sum of squared distances in the objective function instead of sum of distances. \sscs\ problem is defined as
\begin{align*}
\text{\textit{minimize}  }&  
 f(\mathbf{X})=\sum_{k=1}^P \sum_{i \in C_k} h_i{d(v_i,x_k)^2}\\
&\text{subject to } \nonumber \\
&x_k \in \mathbf{G} \quad \forall \, k=1,...,P,
\end{align*}

where \xk{k} is the decision variable representing the location of cluster center $ k $, and $h_i$ is a nonnegative constant weight associated with vertex \vi{i}. Due to hard assignment constraints, each vertex is assigned to exactly one cluster. 

A key difference from the \pmp\ is that, as observed by Carrizosa et al. \cite{VNS1}, the optimal center locations may lie not only in \Vs\ but also in \Es. Consequently, restricting cluster center locations solely to \Vs\ could lead to suboptimal solutions. In this subsection, we analyze this property in more detail.

\paragraph{\bfseries Sum of Squares Clustering on Networks with a Single Cluster}\label{ss:SSC1}

As in \pmp, when there is only one cluster, every vertex belongs to that cluster. Consider the simple line graph with four vertices shown in Figure \ref{fig:graphline4}. The objective function for the \sscs\ problem is given by:
\begin{equation*}
f=(a+y)^2+y^2+(l-y)^2+(b+l-y)^2.
\end{equation*}
This function is continuous and twice differentiable. Its first and second derivatives with respect to $y$ are:
\begin{align}
\frac{df}{dy}=&8y+2a-2l-2b-2l,\label{fDer} \\ 
\frac{d^2{f}}{d{y^2}}=&8, \label{secDer}
\end{align}

which shows that $f$ is a convex function of $y$. \eqref{fDer} gives us

\begin{equation*}
y=\frac{b+2l-a}{4}.
\end{equation*}

Therefore, the optimal center location depends on the following cases:

\begin{itemize}
    \item If $y \in (0,l)$, the optimal center is located within the edge $(v_2,v_3)$.
    \item If $y=0$, the optimal center is at vertex $v_2$.
    \item If $y=l$, the optimal center is at vertex $v_3$.
    \item If $y \in (0,-a)$, the optimal center is within the edge $(v_1,v_2)$.
    \item If $y=-a$, the optimal center is at vertex $v_1$.
    \item If $y \in (l,b+l)$, the optimal center is within the edge $(v_3,v_4)$.
    \item If $y=b+l$, the optimal center is at vertex $v_4$.
\end{itemize} 

Thus, in contrast to the \pmp, the optimal solution for \sscs\ is not necessarily at a vertex but may be within an edge.

Consider a generalized line graph, as in Figure \ref{fig:grgeneral}, where the cluster center \xk{c} is located on an edge $(v_p,v_q)$. The objective function is:

\begin{equation}
f=\sum_{i=1}^N{h_id(v_i,x_c)^2}.
\end{equation}
Using the shortest path properties,
\begin{equation*}
d(v_i,x_c)=\min \left \{ d(v_i,v_p)+d(v_p,x_c),d(v_i,v_q)+d(v_q,x_c) \right \}.
\end{equation*}
We assume vertices are ordered such that:
\begin{align*}
d(v_{i_j},x_c)&= d(v_{i_j},v_p)+d(v_p,x_c), for  \ j=1,...,r, \\
d(v_{i_j},x_c)&= d(v_{i_j},v_q)+d(v_q,x_c), for  \ j=r+1,...,N.
\end{align*}
 
Rewriting the objective function and taking derivatives:

\begin{align}
\frac{\partial{f}}{\partial{x_c}}=&\sum_{j=1}^N{h_{i_j}2d(v_p,x_c)}\nonumber \\ 
& +\sum_{j=1}^r
{2h_{i_j}d(v_{i_j},v_p)}-\sum_{j=r+1}^N
{2h_{i_j}(d(v_{i_j},v_q)+d(v_p,v_q))} \label{eqn:fDer2}\\
\frac{\partial^2{f}}{\partial{x_c^2}}=&\sum_{j=1}^N{2h_{i_j}}. \label{eqn:secDer2}
\end{align}

Since \eqref{eqn:secDer2} is positive, $f$ is convex. Setting \eqref{eqn:fDer2} to zero and solving for $d(v_p,x_c)$, we get:

\begin{equation*}
d(v_p,x_c)=\frac{\sum_{j=r+1}^N{h_{i_j}(d(v_{i_j},v_q)+d(v_p,v_q))}-\sum_{j=1}^r{h_{i_j}d(v_{i_j},v_p)}}{\sum_{j=1}^N{h_{i_j}}}.
\end{equation*}

From this equation, given that the center is to be located on $ (v_p,v_q) $, the following interpretations could be made. If $ d(v_p,x_c)\leq0 $, \xk{c} will be located on vertex \vi{p}. If $ d(v_p,x_c)\geq d(v_p,v_q) $, \xk{c} will be located on vertex \vi{q}. For other values of $ d(v_p,x_c) $, \xk{c} will be located on the edge $ (v_p,v_q) $. This confirms that the optimal center may lie on an edge rather than at a vertex.

Now we can generalize our results for a network \Gr\,with one cluster. Let us assume that the cluster center is on edge $(v_p,v_q)$ and $ f_i $ denotes the objective function component of vertex \vi{i}. There are three cases of $ f_i $ as shown in Figure \ref{fig:ssq1obj}. In (a) and (c), the shortest path to the center passes from \vi{p} and \vi{q}, respectively. In (b), when \xk{c}$\in [v_p,b_i]$ where $ b_i $ is the arc bottleneck point, the shortest path passes from \vi{p}; otherwise, the shortest path passes from \vi{q}. The function is piecewise and both the function in $(0,b_i)$ and the function in $(b_i,l)$ are convex by second derivative test. Each piece of $f_i$ is convex and increasing with the distance. Because of the convexity of each $ f_i $, $ f $ is also convex, which implies that $f$ may contain a local minimum along the edge. Therefore, the optimal solution could be found on the edges. 

\begin{figure}[htbp]
	\centering
	\includegraphics[width=1\linewidth]{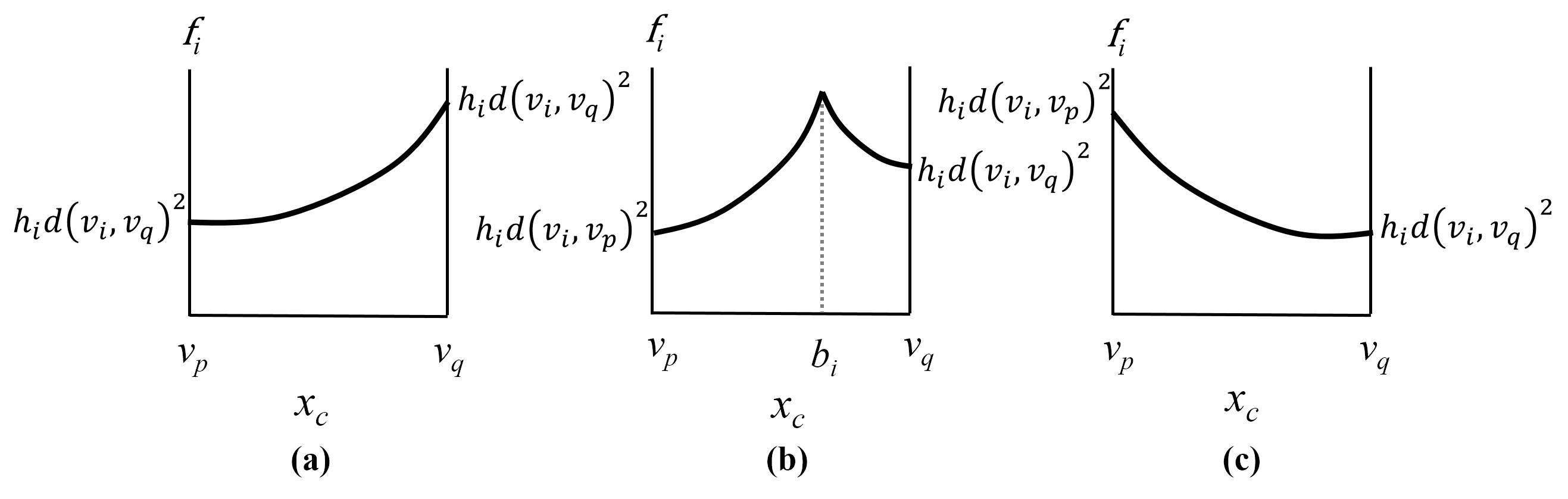}
	\caption{Objective function component for \vi{i} (denoted as $f_i$) in \sscs\, problem with single cluster when \xk{c} is moved along the edge $(v_p,v_q)$}
	\label{fig:ssq1obj}
\end{figure}

\begin{theorem}\label{thm:ssc1}
        In the \sscs\ problem with a single cluster, the optimal cluster center may be located at an interior point of an edge.
	\begin{proof}
		In order to prove this theorem, we will take objective function value of a vertex which has the best objective function value among vertex set V. Then, we will show that under certain conditions, an interior point along the edge will have a lower objective value.
		Let objective function value at vertex $v_p$ be $f^p$. Then,
		\begin{equation*}
		f^p=\sum_{i=1}^N{h_id(v_i,v_p)^2}.
		\end{equation*}
		Assume that the set of points have been arranged such that for points $j=1,...,r$ the shortest path to $v_p$ does not contain the edge $(v_p,v_q)$, and for points $j=r+1,...,N$, the shortest path to $v_p$ contains the edge $(v_p,v_q)$, that is, $d(v_{i_j},v_p)=d(v_{i_j},v_q)+d(v_p,v_q)$. Then, 
		\begin{equation*}
		f_p=\sum_{j=1}^{r}{h_{i_j}d(v_{i_j},v_p)^2}+\sum_{j=r+1}^N{h_{i_j}(d(v_{i_j},v_q)+d(v_p,v_q))^2}.
		\end{equation*}
		There exists a point $x$ on the edge $(v_p,v_q)$ at which the same partitioning is valid. Then, the objective function value at $x$ is
		\begin{equation*}
		f^x=\sum_{j=1}^{r}{h_{i_j}(d(v_{i_j},v_p)+d(v_p,x))^2}+\sum_{j=r+1}^N{h_{i_j}(d(v_{i_j},v_q)+d(v_p,v_q)-d(v_p,x))^2}.
		\end{equation*}
		Rearranging this expression, we have
		\begin{equation*}
		\resizebox{.95\hsize}{!}{$f^x=f^p+d(v_p,x)^2\left[\sum_{j=1}^Nh_{i_j}\right]+2d(v_p,x) \left[\sum_{j=1}^rh_{i_j}d(v_{i_j},v_p) -\sum_{j=r+1}^Nh_{i_j}(d(v_{i_j},v_q)+d(v_p,v_q))\right].$}
		\end{equation*}
		Let
		\begin{align*}
		\mathmodify{a=\sum_{j=1}^Nh_{i_j}},\quad & \mathmodify{b=\sum_{j=1}^rh_{i_j}d(v_{i_j},v_p)-\sum_{j=r+1}^Nh_{i_j}(d(v_{i_j},v_q)+d(v_p,v_q))}.
		\end{align*}
		Then, we have
		\begin{equation}\label{eqzpzx}
		f^x=f^p+ad(v_p,x)^2+2bd(v_p,x).
		\end{equation}
		On the right-hand side, the expression after $f^p$ is a quadratic function of $d(v_p,x)$, that is, \mathmodify{$f(x)=ax^2+2bx$. From \eqref{eqzpzx}, we can say that if $f(x)\leq0$, $f^x \leq f^p$.} If $f(x)>0$, $f^x>f^p$.
		This is possible when $d(v_p,x)\in[0,-2b/a]$. In order for \textit{x} to have a nonempty interval, $-2b/a\geq0$. Since $a$ is nonnegative, this is possible if $b\leq 0$. Hence, the following condition must hold.
		\begin{equation*}
		\sum_{j=1}^rh_{i_j}d(v_{i_j},v_p)\leq \sum_{j=r+1}^Nh_{i_j}(d(v_{i_j},v_q)+d(v_p,v_q))
		\end{equation*}
	\end{proof}
\end{theorem}

\paragraph{\bfseries \sscs\ on Networks with P Clusters}
When $P$ clusters exist, each vertex is assigned to its closest cluster center, minimizing the sum of squared distances. As cluster centers move along edges, assignments may change at assignment bottleneck points, as illustrated in Figure \ref{fig:ssqpobj}.

\begin{figure}[htbp]
	\centering
	\includegraphics[width=1\linewidth]{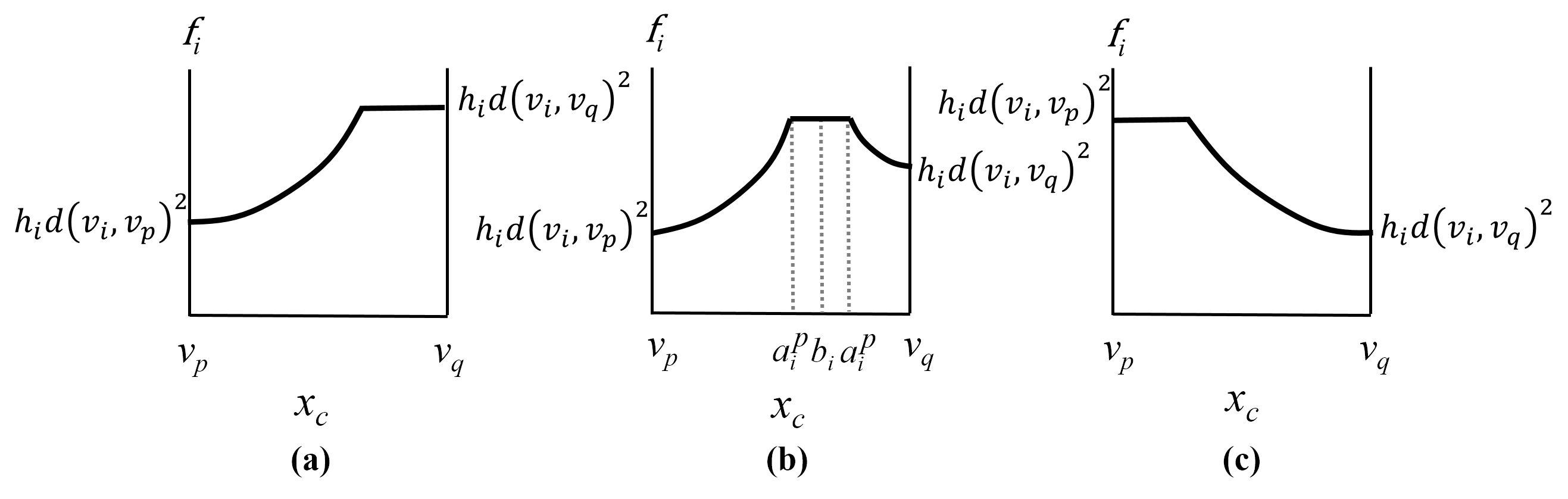}
	\caption{Objective function component for \vi{i} (denoted as $f_i$) in \sscs\, problem with P clusters when \xk{c} is moved along the edge $(v_p,v_q)$ and \xk{k} is the second closest cluster center to \vi{i}}
	\label{fig:ssqpobj}
\end{figure}

\begin{theorem}\label{thm:sscp}
	Let $\mathbf{V^*}$ be a set of P vertices $\left \{ v_1^*,v_2^*,...,v_P^* \right \}$ which is the optimal solution among all possible \Vs sets. There exists a subset $\mathbf{X^*} \in$ \Gr\, containing s ($ s \leq P $) centers located on edges and the remaining centers at vertices $\left \{ x_c,x_2,...,x_s\right \}$ $\left \{v_{s+1}^*,v_{s+2}^*,...,v_P^* \right \}$, such that
	\begin{equation*}
	\sum_{i=1}^{N}{h_{i}d(v_{i},\mathbf{V^*})}^2 \geq \sum_{i=1}^{N}{h_{i}d(v_{i},\mathmodify{\mathbf{X^*}})}^2
	\end{equation*}
\end{theorem}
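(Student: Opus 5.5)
The statement is essentially an existence claim with $s$ allowed to be $0$. Taking $s=0$ and $\mathbf{X^*}=\mathbf{V^*}$ gives equality, so the inequality holds trivially. The substance I intend to prove is the version in which one center is moved off its vertex into the interior of an incident edge, whenever the one-cluster condition of Theorem~\ref{thm:ssc1} holds for that center's cluster. I will argue cluster by cluster, reducing to Theorem~\ref{thm:ssc1} while keeping the assignments of $\mathbf{V^*}$ fixed.

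\textbf{Step 1 (fix the partition).} Let $C_1,\dots,C_P$ be the clusters induced by $\mathbf{V^*}$, with each vertex assigned to its nearest $v_k^*$. For any center set $\mathbf{X}$ and any fixed assignment $\sigma$ of vertices to centers,
\begin{equation*}
\sum_{i=1}^N h_i d(v_i,\mathbf{X})^2 \le \sum_{k=1}^P\sum_{i\in C_k} h_i d(v_i,x_k)^2 ,
\end{equation*}
because $d(v_i,\mathbf{X})=\min_k d(v_i,x_k)$. It therefore suffices to bound the right-hand side, which separates into $P$ independent single-cluster \sscs\ problems, one on each $C_k$.

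\textbf{Step 2 (apply the single-cluster result).} Fix a cluster $C_c$ with center $v_p=v_c^*$ and an incident edge $(v_p,v_q)$. Split $C_c$, as in the proof of Theorem~\ref{thm:ssc1}, into the vertices whose shortest path to $v_p$ avoids the edge and those whose path uses it. For small $d(v_p,x)$, the expansion \eqref{eqzpzx} gives
\begin{equation*}
f^x_c = f^p_c + a\,d(v_p,x)^2 + 2b\,d(v_p,x),
\end{equation*}
with $a,b$ computed over $C_c$ only. The single-cluster objective can only be lower than this expression, since the true shortest path may switch to the other endpoint. Hence whenever $b<0$, any $x$ with $d(v_p,x)\in(0,\min\{-2b/a,\,l_e\})$ satisfies $f^x_c<f^p_c$. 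Replace $v_c^*$ by such an $x_c$, the minimizer $-b/a$ clipped to the edge being the natural choice, and do the same for every cluster where some incident edge has $b<0$. Call the number of moved centers $s$; the remaining centers stay at their vertices $v_{s+1}^*,\dots,v_P^*$.

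\textbf{Step 3 (combine).} Summing the per-cluster inequalities and applying Step 1 gives
\begin{equation*}
\sum_i h_i d(v_i,\mathbf{V^*})^2=\sum_k f_k^{v_k^*}\ge \sum_k f_k^{x_k}\ge \sum_i h_i d(v_i,\mathbf{X^*})^2 .
\end{equation*}
Reassignment at assignment bottleneck points (Proposition~\ref{prop:abpbound}) can only decrease the objective further, so those points need no separate treatment.

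\textbf{Main obstacle.} The difficulty is the validity of the fixed partition on the interval used in Step 2. The expansion in Theorem~\ref{thm:ssc1} assumes the "via $v_p$ / via $v_q$" split does not change along the interval. Arc bottleneck points can break this, and Step 1 fixes the cluster assignment but not which endpoint a shortest path uses. I plan to get around this by working with the upper bound $\min\{\cdot,\cdot\}\le$ the branch used at $v_p$, so the quadratic is always an upper bound and never needs to be exact. I will also need the edge to realize $d(v_p,v_q)$, as in Proposition~\ref{prop:abpbound}, so that the expansion at $x=v_p$ agrees with $f^p_c$.
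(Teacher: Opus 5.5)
Your proposal is correct and follows essentially the same route as the paper. Both decompose the objective by the clusters of $\mathbf{V^*}$, apply the single-cluster expansion of Theorem~\ref{thm:ssc1} to each cluster satisfying the condition, sum the per-cluster inequalities, and use $d(v_i,\mathbf{X})=\min_k d(v_i,x_k)$ to absorb the reassignment. Your one-sided bound (each vertex keeps the branch, via $v_p$ or via $v_q$, that it uses at $v_p$) makes rigorous a step the paper only asserts, namely that ``the same partitioning is valid'' at the interior point $x$. That bound is valid for all $d(v_p,x)\in[0,l_e]$ once $d(v_p,v_q)=l_e$, and this equality holds automatically when $b<0$, because then some shortest path to $v_p$ traverses the edge.
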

\ipekmodify{\begin{proof}[Proof sketch]
Rearranging the vertices by their closest center decomposes the objective into per-cluster sums $f_1,...,f_P$, each a single-cluster \sscs\ objective over its own vertex set. By Theorem~\ref{thm:ssc1}, whenever the interior-optimum condition holds for a cluster, its center can be relocated from the vertex to an interior point of an adjacent edge without increasing its sum. Applying this to the $s$ clusters satisfying the condition and observing that the subsequent reassignment of vertices to the new centers can only decrease the objective further yields the claim. The complete derivation is given in~\ref{app:sscp}.
\end{proof}}
This theorem reinforces that restricting cluster centers to vertices may lead to suboptimal solutions in the \sscs\ problem.


\subsection{Soft Assignment Problems}
In this section, two clustering problems that perform soft assignment will be discussed. In soft assignment, each vertex is assigned to all clusters with a probability. There are two soft clustering problems defined in the scope of this study. Both of these problems have been studied on the plane in the literature, and to the best of our knowledge, they have not been studied on networks before. These two problems are called as Probabilistic Distance Clustering (\pdc) and Fuzzy Clustering (\fcm). These problems differ in the objective functions and membership functions they use. As a result of this difference, they have different properties, which will be discussed in further detail.

\subsubsection{Probabilistic Distance Clustering (\pdc) Problem on Networks}

On a network, \pdcp\ is defined as
\begin{align}
\text{\textit{minimize}} \quad & f(\mathbf{X})=\sum_{i=1}^N\sum_{k=1}^P {p_{ik}^2d(v_i,x_k)} \label{objF1}\\
&\text{subject to } \nonumber \\
& \sum_{k=1}^P{p_{ik}=1\quad  \forall \, i=1,...,N},\nonumber \\
& p_{ik} \geq 0 \quad  \forall \, i=1,...,N, \, k=1,...,P,\nonumber\\
& x_k \in \mathbf{G} \quad \forall \, k=1,...,P, \nonumber\\
\end{align}
where \xk{k} is the location of cluster center \( k \) and \( p_{ik} \) is the membership value of \vi{i} to cluster \( k \). For each vertex, the summation of memberships to all clusters must be equal to 1. As shown in \cite{IyigunThesis} by Iyigun and Ben-Israel, using the Lagrangian Method and keeping all \xk{k} fixed, the optimal membership function is

\begin{equation}\label{optp}
p_{ik}^*=\frac{1}{\sum_{l=1}^P{\frac{d(v_i,x_k)}{d(v_i,x_l)}}}.
\end{equation}

Analyzing this problem, we observe that the optimal center locations are on \Vs, which will be proven in this subsection.

\paragraph{\bfseries \pdc\ on Networks with a Single Cluster}\label{ss:PD1}

In \pdcp, when there is one cluster, each vertex will have a membership value of 1 to that cluster. As a result, the problem becomes similar to 1-median problem. $f_i$, the objective function component of \vi{i}, is as illustrated in Figure \ref{fig:pmed1obj} in the example illustrated in Figure \ref{fig:grgeneral}. It is linear and piecewise concave, and its behavior changes at arc bottleneck point if \xk{c} is moved along an edge $(v_p,v_q)$. If there is an arc bottleneck point on an edge as in (b), $f_i$ is linear and piecewise concave along the edge such that it has its maximum at the arc bottleneck point. If there are no arc bottleneck points as in (a) and (c), the distance function is linear. Summation of piecewise concave and linear functions is linear and piecewise concave as well, which is the objective function \eqref{objF1}. Therefore, locating the center to an interior point of an edge will lead higher objective function values. The theorem and its proof is given below.

\begin{theorem}
	In single center \pdcp, \Vs\ contains the set of optimal solutions.
	\begin{proof}
		To prove this theorem, it will be shown that an interior point $x$ on an edge $(v_p,v_q)$ could not have a lower objective value than the vertex \vi{p} which has the best objective value among vertex set \Vs.
		Let objective function value at vertex $v_p$ be $f^p$. Then,
		\begin{equation*}
		f^p=\sum_{i=1}^{N}{p_i^2d(v_i,v_p)}.
		\end{equation*}
		Assume that the set of vertices arranged as the ones whose shortest path to \vi{p} contains the edge $ (v_p,v_q) $ or not. Let $v_{i_j}, j=1,...,r$ show the vertices that does not contain the edge $(v_p,v_q)$, and $j=r+1,...,N$ show the ones that contains the edge $(v_p,v_q)$, that is, $d(v_{i_j},v_p)=d(v_{i_j},v_q)+d(v_p,v_q)$. Then, 
		\begin{equation*}
		f^p=\sum_{j=1}^{r}{p_{i_j}^2d(v_{i_j},v_p)}+\sum_{j=r+1}^{N}{p_{i_j}^2(d(v_{i_j},v_q)+d(v_p,v_q))}.
		\end{equation*}
		There exists a center $x$ on the edge $(v_p,v_q)$ at which the same arrangement is valid. Then, the objective function value at $x$ is
		\begin{equation*}
		f^x=\sum_{j=1}^{r}{p_{i_j}^2(d(v_{i_j},v_p)+d(v_p,x))}+\sum_{j=r+1}^{N}{p_{i_j}^2((d(v_{i_j},v_q)+d(v_p,v_q)-d(v_p,x)))}.
		\end{equation*}
		Rearranging this expression, we have
		\begin{equation*}
		f^x=f^p+d(v_p,x)\left[\sum_{j=1}^rp_{i_j}^2-\sum_{j=r+1}^Np_{i_j}^2\right].
		\end{equation*}
		\begin{equation} \label{case1Eq}
		\sum_{j=1}^rp_{i_j}^2\geq\sum_{j=r+1}^Np_{i_j}^2 \implies f^p\leq f^x .
		\end{equation}
		Suppose that
		\begin{equation}\label{case2Eq}
		\sum_{j=1}^rp_{i_j}^2<\sum_{j=r+1}^Np_{i_j}^2.
		\end{equation}
		Since $d(v_p,v_q)$ is a positive constant, multiplying both sides with $d(v_p,v_q)$, we have
		\begin{equation*}
		d(v_p,v_q)\sum_{j=1}^rp_{i_j}^2<d(v_p,v_q)\sum_{j=r+1}^Np_{i_j}^2.
		\end{equation*}
		We may rewrite $f^p$ as
		\begin{equation*}
		f^p=\sum_{j=1}^{r}{p_{i_j}^2d(v_{i_j},v_p)}+\sum_{j=r+1}^{N}{p_{i_j}^2d(v_{i_j},v_q)}+\sum_{j=r+1}^{N}{p_{i_j}^2d(v_p,v_q)}.
		\end{equation*}
		By \eqref{case2Eq}, we may write
		\begin{equation*}
		f^p > \sum_{j=1}^{r}{p_{i_j}^2d(v_{i_j},v_p)}+\sum_{j=r+1}^{N}{p_{i_j}^2d(v_{i_j},v_q)}+\sum_{j=1}^{r}{p_{i_j}^2d(v_p,v_q)}.
		\end{equation*}
		Right-hand side of this inequality is an upper bound to the objective function value at $v_q$. Hence,
		\begin{equation*}
		f^p > f^q,
		\end{equation*}
		which contradicts with \vi{p}'s having the minimum objective value among all vertices. This implies that $f^x$ will always be greater than $f^p$. Therefore, the optimal location will always be on a vertex.
	\end{proof}
\end{theorem}

In the following subsection, a generalized version of this problem, \pdcp, with $ P $ clusters will be analyzed.

\paragraph{\bfseries \pdc\ on Networks with P Clusters}\label{ss:pdPclus}

When there are $ P $ clusters, \pdc\ works with membership values which depend on location of centers. In this subsection, it will be proven that in the optimal solution, centers of a \pdcp\ with $ P $ clusters on a network will be on vertices. 

For the sake of simplicity, suppose we have two clusters. If \eqref{optp} is evaluated for this case, membership value of vertex $i$ will be

\begin{align}\label{memberforTwo}
p_{i1}&=\frac{d(v_i,x_2)}{d(v_i,x_c)+d(v_i,x_2)}, &
p_{i2}&=\frac{d(v_i,x_c)}{d(v_i,x_c)+d(v_i,x_2)}. 
\end{align}

\begin{figure}[htbp]
	\centering
	\includegraphics[width=1\linewidth]{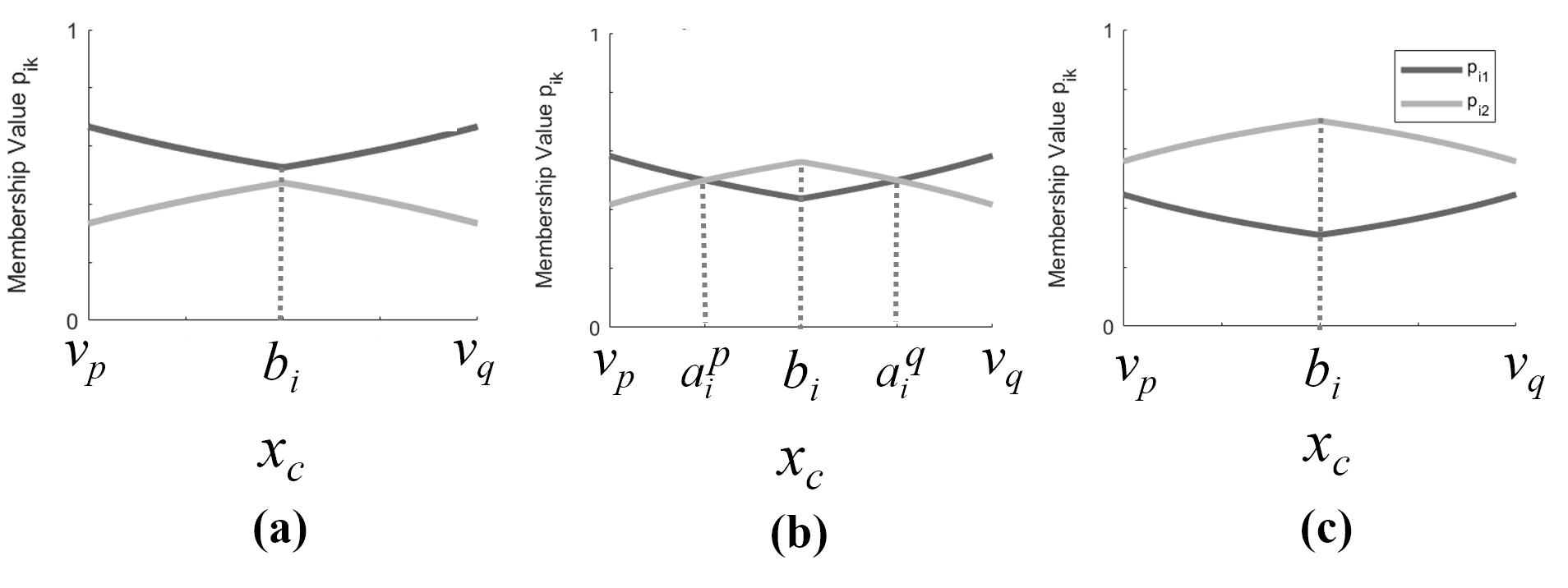}
	\caption{Membership function $p_{ik}$ of \pdc\ with 2 clusters}
	\label{fig:pdmemplot}
\end{figure}

For a graph \Gr\ with two clusters as in Figure \ref{fig:grgeneral}, keeping \xk{2} fixed and moving \xk{c} on the edge $(v_p,v_q)$, change in the membership functions \pik{i}{1} and \pik{i}{2} has been visualized in Figure \ref{fig:pdmemplot}. Although \xk{2} has a fixed location, it is affected from the location change of \xk{c}. In (a), (b) and (c), as \xk{c} moves towards arc bottleneck point $b_i$, \pik{i}{1} decreases since distance increases and reaches the maximum at $b_i$. As \pik{i}{1} decreases, \pik{i}{2} increases. In (a), even at the arc bottleneck point, \dik{i}{1} is less than \dik{i}{2}; therefore, \pik{i}{2} increases but it cannot be greater than \pik{i}{1}. In (c), the contrast occurs. \dik{i}{2} is less than \dik{i}{1} even when \xk{c} is located on endpoints \vi{p} or \vi{q}. Therefore, \pik{i}{1}is always less than \pik{i}{2}. In (b), when  $x_c \in [v_p,a_i^p]$, \pik{i}{1} is greater than \pik{i}{2}. When $x_c \in [a_i^p,a_i^q]$, \pik{i}{2} is greater than \pik{i}{1} since \dik{i}{2} is less than \dik{i}{1}. Lastly, as  $x_c \in [a_i^q,v_q]$, again, \dik{i}{1} decreases and becomes less than \dik{i}{2}. Therefore, \pik{i}{1} is greater than \pik{i}{2}. In (b), assignment bottleneck points could be observed as the points where \pik{i}{1}=\pik{i}{2}.

If \eqref{memberforTwo} is substituted in \eqref{objF1}, the objective function will be
\begin{equation*}\label{obj1For2}
f(\textbf{X})=\sum_{i=1}^N{\frac{d(v_i,x_c)d(v_i,x_2)}{d(v_i,x_c)+d(v_i,x_2)}}.
\end{equation*}
For three clusters, the resulting membership values will be
\begin{align*}
p_{i1}&=\frac{d(v_i,x_2)d(v_i,x_3)}{d(v_i,x_c)d(v_i,x_2)+d(v_i,x_2)d(v_i,x_3)+d(v_i,x_c)d(v_i,x_3)}, \\
p_{i2}&=\frac{d(v_i,x_c)d(v_i,x_3)}{d(v_i,x_c)d(v_i,x_2)+d(v_i,x_2)d(v_i,x_3)+d(v_i,x_c)d(v_i,x_3)}, \\
p_{i3}&=\frac{d(v_i,x_c)d(v_i,x_2)}{d(v_i,x_c)d(v_i,x_2)+d(v_i,x_2)d(v_i,x_3)+d(v_i,x_c)d(v_i,x_3)}.
\end{align*}
With the same manner, the objective function could be written as
\begin{equation*}\label{obj1For3}
f(\textbf{X})=\sum_{i=1}^N{\frac{d(v_i,x_c)d(v_i,x_2)d(v_i,x_3)}{d(v_i,x_c)d(v_i,x_2)+d(v_i,x_2)d(v_i,x_3)+d(v_i,x_c)d(v_i,x_3)}}.
\end{equation*}
For the problem with P clusters, the objective function is
\begin{equation}\label{obj1ForP}
f=\sum_{i=1}^N{\frac{\prod_{k=1}^P\,d(v_i,x_k)}{\sum_{k=1}^P{\prod_{l\neq k}d(v_i,x_l)}}}.
\end{equation}
Since we assume that location of \xk{k} is fixed for $k=2,...,P$, we could separate constant components of each vertex as $K_i$ and write the objective function \eqref{obj1ForP} as in \eqref{obj1Pcons}.
\mathmodify{\begin{align}\label{obj1Pcons}
K_i&=\frac{\prod_{k=2}^P{d(v_i,x_k)}}{\sum_{k=2}^P{\prod_{l\neq k, l \geq 2}d(v_i,x_l)}}\quad\quad \text{and} & f(x_c)&=\sum_{i=1}^N{\frac{d(v_i,x_c)K_i}{d(v_i,x_c)+K_i}}.
\end{align}}
Let the memberships $p_{i1}$ of each point considering the location of cluster center 1 be 
\begin{align} \label{memKi}
p_{i1}&=\frac{K_i}{d(v_i,x_c)+K_i}.
\end{align}

Then, objective function \eqref{obj1Pcons} could be simplified as
\begin{align*}
f(x_c)=\sum_{i=1}^Nd(v_i,x_c)p_{i1}.
\end{align*}

$f_i$, contribution of vertex \vi{i} to the function \eqref{obj1Pcons}, is continuous and twice differentiable. First and second order derivatives are 
\begin{align}
\frac{df_i}{dx_c}&=\frac{K_i^2}{(K_i+d(v_i,x_c))^2} \nonumber \\
\frac{d^2f_i}{dx_c^2}&=\frac{-2K_i^2}{(K_i+d(v_i,x_c))^3} \label{eqn:PDsecder}
\end{align}

With the second derivative test, since \eqref{eqn:PDsecder} is always negative, we can conclude that $f_i$ is concave. 
Let \Gr\,be a graph with P clusters. Keeping \textit{P-1} clusters fixed and moving one cluster center (let us say \xk{c}) along the edge $(v_p,v_q)$, $f_i$ function could be observed as given in Figure \ref{fig:pdpObj}. In (a) and (c), the shortest path from \vi{i} to \xk{c} passes from $v_p$ and $v_q$, respectively. In (b), when $x_c \in [v_p,b_i]$, $f_i$ increases. When $x_c \in [b_i,v_q]$, $f_i$ decreases with the decreasing distance. In (b), $f_i$ is piecewise concave. Different from hard assignment problems, piecewiseness occurs at only arc bottleneck points.  Since each $f_i$ is concave or piecewise concave, $f$, summation of $f_i, \, \forall \, i=1,...,N$ is also concave. 

\begin{figure}[htbp]
	\centering
	\includegraphics[width=1\linewidth]{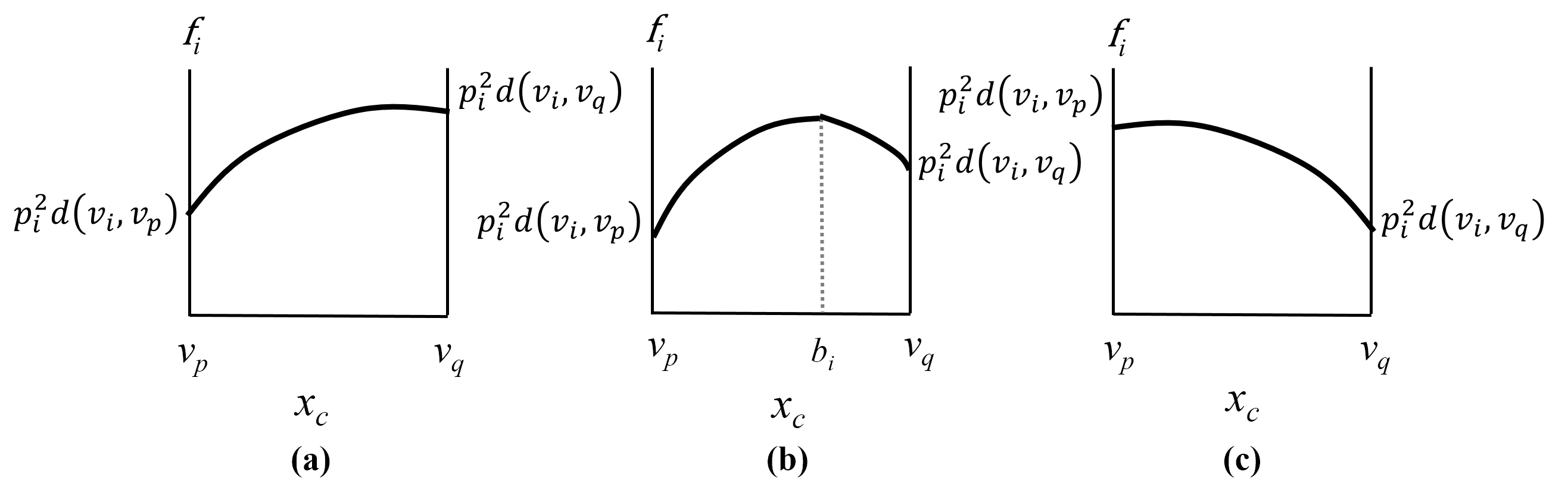}
	\caption{Objective function component for \vi{i} (denoted as $f_i$) in \pdcp\, with P clusters when \xk{c} is moved along the edge $(v_p,v_q)$ and \xk{k} is the second closest cluster center to \vi{i}}
	\label{fig:pdpObj}
\end{figure}

\begin{theorem}\label{thm:pdcp}
	In \pdcp~ with $P$ centers, a cluster center is always at a vertex of a network \Grev.
\end{theorem}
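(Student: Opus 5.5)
We argue by a coordinate-wise (one center at a time) concavity argument, as in the single-center case. Take an optimal configuration $\mathbf{X}=\{x_c,x_2,\dots,x_P\}$ and suppose some center, say $x_c$, lies in the interior of an edge $(v_p,v_q)$. We fix $x_2,\dots,x_P$ and view the objective \eqref{obj1ForP} as a function of $x_c$ alone. This gives the reduced form \eqref{obj1Pcons}, $f(x_c)=\sum_i \frac{d(v_i,x_c)K_i}{d(v_i,x_c)+K_i}$, where the constants $K_i\ge 0$ do not depend on $x_c$. If some $K_i=0$, then some fixed center sits at $v_i$ and the term vanishes identically, so we discard it. For $K_i>0$, write $g_i(s)=sK_i/(s+K_i)$. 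This function is increasing and strictly concave in $s\ge 0$, with $g_i''<0$ by \eqref{eqn:PDsecder}.

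Next we parametrize $x_c$ by $t=d(v_p,x_c)\in[0,l_e]$. Then $d(v_i,x_c)=\min\{d(v_i,v_p)+t,\;d(v_i,v_q)+l_e-t\}$. This is a minimum of two affine functions of $t$, hence concave in $t$. The map $g_i$ is concave and nondecreasing, and composing such a map with a concave function yields a concave function. So each $f_i(t)=g_i(d(v_i,x_c(t)))$ is concave on the whole edge, including across the arc bottleneck point $b_i$. Concavity therefore holds on all of $[0,l_e]$, not merely piecewise, and $f=\sum_i f_i$ is concave on $[0,l_e]$.

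A concave function on a closed interval attains its minimum at an endpoint. Hence $\min\{f(0),f(l_e)\}\le f(t)$ for the interior $t$, so moving $x_c$ to $v_p$ or $v_q$ does not increase the objective. Since the memberships \eqref{optp} are already optimized inside \eqref{obj1ForP}, the objective is exactly the function we minimized. We repeat this for each center lying in an edge interior, one at a time. Each move keeps the other centers fixed and does not increase $f$. After at most $P$ moves, all centers are at vertices and the value is still optimal. This shows that $\mathbf{V}$ contains an optimal solution, which is the sense of the statement.

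The step needing care is the composition argument. First, we need $g_i$ nondecreasing, so that concavity is preserved through the $\min$ at $b_i$; here $g_i'=K_i^2/(K_i+s)^2>0$ suffices. Second, there are degenerate cases: $d(v_i,x_c)=0$ can only occur at endpoints, and coinciding centers make the memberships \eqref{optp} undefined. We handle both by working with the product form \eqref{obj1ForP}, which is continuous, so the endpoint comparison still applies. If strict placement at a vertex is wanted, strict concavity of $g_i$ together with some $K_i>0$ makes $f$ strictly concave in $t$ unless every $d(v_i,\cdot)$ is affine with $g_i$ linear, which cannot happen. In that case an interior point is never a minimizer.
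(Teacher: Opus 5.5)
Your proof is correct, but it takes a different route from the paper's. The paper's appendix argument never uses concavity of the composed function. It fixes an interior point $y_0$ and splits the vertices by whether their shortest path to $y_0$ enters through $v_p$ or $v_q$. It then rewrites each term $dK_i/(d+K_i)$ algebraically and applies the triangle inequalities $d(v_i,v_q)+d(v_p,v_q)\ge d(v_i,v_p)$ and its mirror image. This yields two explicit bounds, $f(y_0)\ge f(v_p)+t\,(S_1-S_2)$ and $f(y_0)\ge f(v_q)+(l_e-t)(S_2-S_1)$, where $S_1=\sum_{j\le r}p_{i_jy_0}p_{i_jv_p}$ and $S_2=\sum_{j>r}p_{i_jy_0}p_{i_jv_q}$. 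The sign of $S_1-S_2$ then picks the end vertex that dominates $y_0$. Because one slope certifies both bounds, this is in effect a supergradient inequality at $y_0$, verified by hand at the two endpoints. Your composition lemma (concave nondecreasing $g_i$ composed with the concave minimum of two affine functions) delivers the same fact in one line.

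Your version buys several things:
\begin{itemize}
\item It is shorter, and it makes rigorous a point the paper's main-text discussion only asserts. ``Piecewise concave'' alone would not exclude a minimum at an arc bottleneck point. It is the monotonicity of $g_i$ that turns the kink of $d(v_i,\cdot)$ at $b_i$ into a concave kink.
\item Strict concavity gives a stronger conclusion: no optimal solution has an interior center whenever some $K_i>0$ (for example when $P\le N$). The paper's weak inequalities only show that some optimum lies on vertices.
\item It extends to any membership model whose optimized per-vertex cost is concave and nondecreasing in the distance. For instance, $\min_{p}\sum_k p_k^2 d_k$ over the simplex is a minimum of functions linear in $(d_1,\dots,d_P)$. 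That gives both properties without derivatives and also absorbs the zero-distance and coincident-center cases.
\item It explains why Theorem~\ref{thm:fcmp} comes out differently: for FCM with $m=2$, the outer function $s^2K_i/(s^2+K_i)$ is convex near $0$.
\end{itemize}
In exchange, the paper's computation gives an explicit, membership-based criterion for which endpoint to move to.

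Two cosmetic points. Parametrize by position along the edge rather than by $d(v_p,x_c)$; the two differ if the edge is not a shortest $v_p$--$v_q$ path. Also, \eqref{obj1ForP} is literally $0/0$ when two distances vanish, so the continuity you invoke is that of its extension by $0$.
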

\ipekmodify{\begin{proof}[Proof sketch]
Fix all centers except $x_c$ and let $y_0$ be an arbitrary interior point of an edge $(v_p,v_q)$. Splitting the vertices according to whether their shortest path to $y_0$ enters the edge through $v_p$ or $v_q$, and substituting $d(v_q,y_0)=d(v_p,v_q)-d(v_p,y_0)$, expresses the objective as a function of the single variable $d(v_p,y_0)$. Comparing the two membership-weighted sums induced by this split shows that, depending on which sum dominates, relocating $x_c$ to $v_p$ or to $v_q$ does not increase the objective. Hence no interior point is strictly better than both end vertices, and a cluster center can always be located at a vertex. The complete derivation, covering both cases, is given in~\ref{app:pdcp}.
\end{proof}}

This proof supports Levy's proof that in the case of concavity, the center will be located on \Vs. This result could be generalized such that all the cluster centers are located on vertices in the optimal solution.

\begin{theorem}\label{thm:pdcpall}
	For every cluster center $\{ x_1,x_2,...,x_P\} \in G$, 
	there exists \\ $\{v_{m_1},v_{m_2},...,v_{m_P}\} \in V$ such that 
	\begin{equation*}
	\sum_{k=1}^P{\sum_{i=1}^N{p_{ik}^2d(v_i,x_k)}} \geq \sum_{k=1}^P{\sum_{i=1}^N{p_{ik}^2d(v_i,v_k)}}.
	\end{equation*}
\end{theorem}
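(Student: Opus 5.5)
The plan is to obtain Theorem~\ref{thm:pdcpall} from Theorem~\ref{thm:pdcp} by moving the centers onto vertices one at a time, showing that no move increases the objective. Start from an arbitrary configuration $\mathbf{X}^{(0)}=\{x_1,\dots,x_P\}$. For the memberships, use the optimal ones from \eqref{optp}, recomputed after each move; keeping them fixed would also work for a single move. The objective then takes the reduced form \eqref{obj1ForP}. At stage $t$ we hold $x_k$ for $k\neq t$ fixed and view the objective as a function of $x_t$ alone. Exactly as in \eqref{obj1Pcons}, this function is $\sum_i d(v_i,x_t)K_i/(d(v_i,x_t)+K_i)$, where the constants $K_i$ depend only on the other centers.

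Theorem~\ref{thm:pdcp}, through its proof sketch, says that if $x_t$ lies in the interior of an edge $(v_p,v_q)$, then one of $v_p$ or $v_q$ has objective value no larger than at $x_t$. Call that vertex $v_{m_t}$ and set $\mathbf{X}^{(t)}=\mathbf{X}^{(t-1)}$ with $x_t$ replaced by $v_{m_t}$. If $x_t$ is already a vertex, leave it unchanged. Each stage gives $f(\mathbf{X}^{(t-1)})\ge f(\mathbf{X}^{(t)})$. After $P$ stages, every center sits at a vertex, and chaining the inequalities gives $f(\mathbf{X}^{(0)})\ge f(\{v_{m_1},\dots,v_{m_P}\})$. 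This is the claimed inequality, reading $v_k$ on the right-hand side as $v_{m_k}$. Because the memberships on the right are re-optimized via \eqref{optp}, the right-hand side equals the reduced objective at the vertex configuration. If the statement is instead meant with the original $p_{ik}$ held fixed, each stage is even simpler: the objective is then linear in $d(v_i,x_t)$, and the single-center argument (case split \eqref{case1Eq}/\eqref{case2Eq}) applies directly with weights $p_{it}^2$.

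The main obstacle is ensuring that the one-center step is valid for an arbitrary fixed configuration of the others. The cases to check are these: some $K_i=0$, because another center coincides with a vertex $v_i$; and a vertex $v_i$ whose distance to $x_t$ has an interior arc bottleneck point, so that $d(v_i,\cdot)$ is only piecewise linear along the edge.

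For the first case, the term for $v_i$ vanishes identically (with the convention $p_{ik}=1$ at a zero distance), so it is harmless.

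For the second case, I would argue directly that each $f_i$ is a nondecreasing concave function $g(u)=uK_i/(u+K_i)$ composed with $u=d(v_i,x_t)$. Along the edge, $u$ is the minimum of two affine functions and hence concave. A nondecreasing concave function of a concave function is concave, so each $f_i$ is concave along the whole edge $[v_p,v_q]$, and so is their sum. A concave function on a segment attains its minimum at an endpoint, which is exactly the vertex choice needed in each stage. This concavity argument also covers the proof of Theorem~\ref{thm:pdcp} independently of how the case split there was handled, so I would present it as the core lemma and then run the induction over $t=1,\dots,P$.
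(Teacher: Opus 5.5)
Your proposal is correct and follows the paper's proof of Theorem~\ref{thm:pdcpall}: move the centers onto vertices one at a time using Theorem~\ref{thm:pdcp}, recompute the optimal memberships after each move, and chain the $P$ resulting inequalities, reading $v_k$ on the right-hand side as $v_{m_k}$. Your concavity lemma is not needed once Theorem~\ref{thm:pdcp} is granted, but it is a shorter justification of the single-center step than the appendix's two-case algebra: each term is the nondecreasing concave map $u\mapsto uK_i/(u+K_i)$ applied to the concave, piecewise-affine edge distance, so the sum is concave along the edge and is minimized at an endpoint.
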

\ipekmodify{\begin{proof}[Proof sketch]
Starting from an arbitrary solution, apply Theorem~\ref{thm:pdcp} to each center in turn, keeping the others fixed at their most recent locations. Each relocation to a vertex does not increase the objective, even though the membership values update at every step. Chaining the resulting $P$ inequalities yields the claim. The full chain is given in~\ref{app:pdcp}.
\end{proof}}

As a result, it has been shown that in \pdcp\ on Networks, the optimal solution will always be located on vertices. Therefore, optimum objective function value will not change if the center locations are restricted to \Vs\ instead of \Gr.

\subsubsection{\fc\ (\fcs) Problem on Networks}

\fc\ Problem on Networks is defined as

\begin{align}
\text{\textit{minimize}} \quad & f(\textbf{X})=\sum_{i=1}^N\sum_{k=1}^P {p_{ik}^md(v_i,x_k)^2} \label{eqn:objFuz1}\\
&\text{subject to } \nonumber \\
& \sum_{k=1}^P{p_{ik}=1\quad  \forall \, i=1,...,N},\nonumber \\
& p_{ik} \geq 0 \quad  \forall \, i=1,...,N, \, k=1,...,P,\nonumber\\
& x_k \in G \quad \forall \, k=1,...,P, \nonumber
\end{align}

where \xk{k} is the location of cluster center $ k $ and $p_{ik}$ is the membership value of \vi{i} to cluster k. $ m $ is called \textit{fuzzifier}, or \textit{fuzziness index}. It determines the level of fuzziness in memberships. If $ m=1 $, the problem becomes a hard assignment problem - to be more precise, \sscs\ problem. As $ m $ gets larger, all membership values converge to $1/P$.  For each vertex, summation of memberships to all clusters must be equal to 1. Derived by Bezdek et.al. in \cite{Fuzzy1} with the use of Lagrangian, keeping all \xk{k} fixed, membership function is

\begin{equation}\label{eqn:Foptp}
p_{ik}^*=\frac{1}{\sum_{l=1}^P{\left(\frac{d(v_i,x_k)}{d(v_i,x_l)}\right)^\frac{2}{(m-1)}}}.
\end{equation}

When this problem has been investigated, it has been observed that the optimal center locations could be on anywhere on the \Gr. In this subsection, this property will be analyzed.

\paragraph{\bfseries \fc\ on Networks with a Single Cluster}\label{ss:Fuzzy1}

As in \pdc\, if there is a single cluster, all vertices will have a membership equal to 1. \fcs\ with 1 cluster differs from \pdc\ in that \eqref{eqn:objFuz1}, becomes sum of squared distances. Therefore, the problem will demonstrate characteristics of \sscs\, problem with 1 cluster. In a \Gr\, with one cluster \xk{c} moving along the edge $(v_p,v_q)$, $f_i$, the objective function component of \vi{i}, will be as in Figure \ref{fig:ssq1obj}. $f_i$ is a second degree polynomial function increasing with \dik{i}{k}. $f_i$ is convex or piecewise on an edge. This piecewiseness occur at arc bottleneck points. Each piece of $f_i$ is convex and increasing with distance. Because of the convexity, $f$ which is summation of $f_i$ functions is also convex. But it is not monotone; therefore, it may contain a local minimum along an edge. As a result, there could be an optimal center location located at the interior point of an edge. Based on this observation, we can conclude that the following theorem holds.

\begin{theorem}
Let $\mathbf{V^*}$ be a set of P vertices $\left \{ v_1^*,v_2^*,...,v_p^* \right \}$ which is the optimal solution among all possible \Vs\ sets. In \fcp\ on networks with a single cluster, there may exist a subset $\mathbf{X^*} \in$ \Gr\ containing centers located on edges such that it has an objective function value lower than $\mathbf{V^*}$. 

\begin{proof}
When $P=1$, the membership constraint forces $p_{i1}=1,~\forall~i$, so the objective \eqref{eqn:objFuz1} reduces to $\sum_{i=1}^N{d(v_i,x_1)^2}$, which is the single-cluster \sscs\ objective with unit weights ($h_i=1$). The claim follows from Theorem~\ref{thm:ssc1}.
\end{proof}
\end{theorem} 

\paragraph{\bfseries \fc\ on Networks with P Clusters}\label{ss:Fuzzyp}

In this subsection, objective function of \fcs\ Problem with $ P $ clusters will be analyzed and structural properties will be investigated.

For the sake of simplicity, suppose we have two clusters. If \eqref{eqn:Foptp} is evaluated for this case, membership value of vertex $i$ will be

\begin{align}\label{eqn:FmemberforTwo}
p_{i1}&=\frac{d(v_i,x_2)^\frac{2}{(m-1)}}{d(v_i,x_c)^\frac{2}{(m-1)}+d(v_i,x_2)^\frac{2}{(m-1)}}, &
p_{i2}&=\frac{d(v_i,x_c)^\frac{2}{(m-1)}}{d(v_i,x_c)^\frac{2}{(m-1)}+d(v_i,x_2)^\frac{2}{(m-1)}}. 
\end{align}

For a graph \Gr\ with two clusters as in Figure \ref{fig:grgeneral}, keeping \xk{2} fixed and moving \xk{c} on the edge $(v_p,v_q)$, change in the membership functions \pik{i}{1} and \pik{i}{2} has been visualized in Figure \ref{fig:FCmemplotm2} with fuzziness index $ m $ value of 2. As in \pdc, both memberships are affected by the location change of \xk{c}. In (a), (b) and (c), as \xk{c} moves towards arc bottleneck point $b_i$, \pik{i}{1} decreases due to the increase in distance. As \pik{i}{1} decreases, \pik{i}{2} increases. (a) illustrates the case \dik{i}{1} is less than \dik{i}{2}; therefore, \pik{i}{2} is less than \pik{i}{1}. (c) is the case \dik{i}{2} is less than \dik{i}{1}; as a result, \pik{i}{1} is less than \pik{i}{2}. In (b), if  $x_c \in [v_p,a_i^p]$, \pik{i}{1} is greater than \pik{i}{2}. If $x_c \in [a_i^p,a_i^q]$, \pik{i}{2} is greater than \pik{i}{1}. In the last interval which is  $x_c \in [a_i^q,v_q]$, \dik{i}{1} is less than \dik{i}{2}. Hence, \pik{i}{1} is greater than \pik{i}{2}. In (b), assignment bottleneck points could be observed as the points where \pik{i}{1}=\pik{i}{2}. Figure \ref{fig:FCmemplotm20} is drawn with fuzziness index $m=20$ to illustrate effect of increase in $m$ in membership function. As could be observed, it does not change the behavior of the membership function. However, even at the points where \dik{i}{1} values have the maximum difference, memberships \pik{i}{1} and \pik{i}{2} are very close to each other compared to the case of m=2. The effect of increase in $m$ is increase in the fuzziness of the memberships.

\begin{figure}[htbp]
	\centering
	\includegraphics[width=1\linewidth]{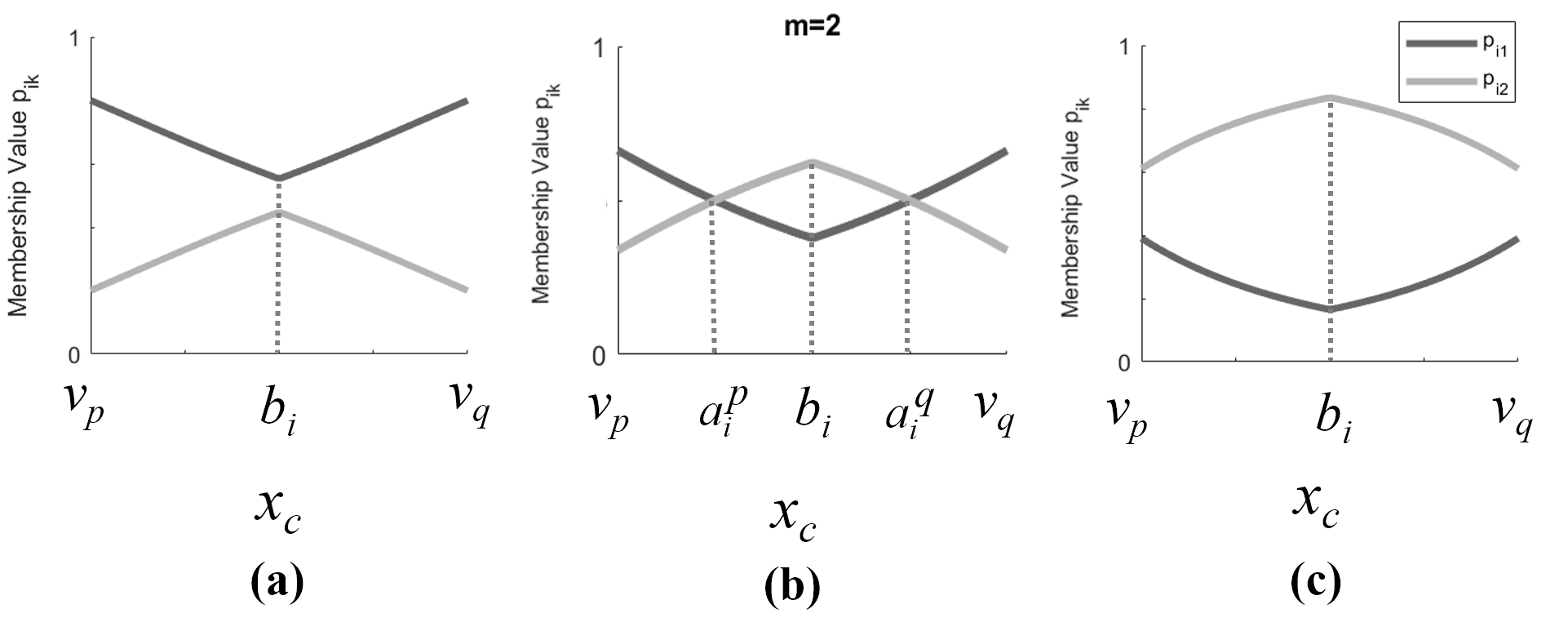}
	\caption{Membership function $p_{ik}$ of \fcs\ with 2 clusters when m=2}
	\label{fig:FCmemplotm2}
\end{figure}

\begin{figure}[htbp]
	\centering
	\includegraphics[width=1\linewidth]{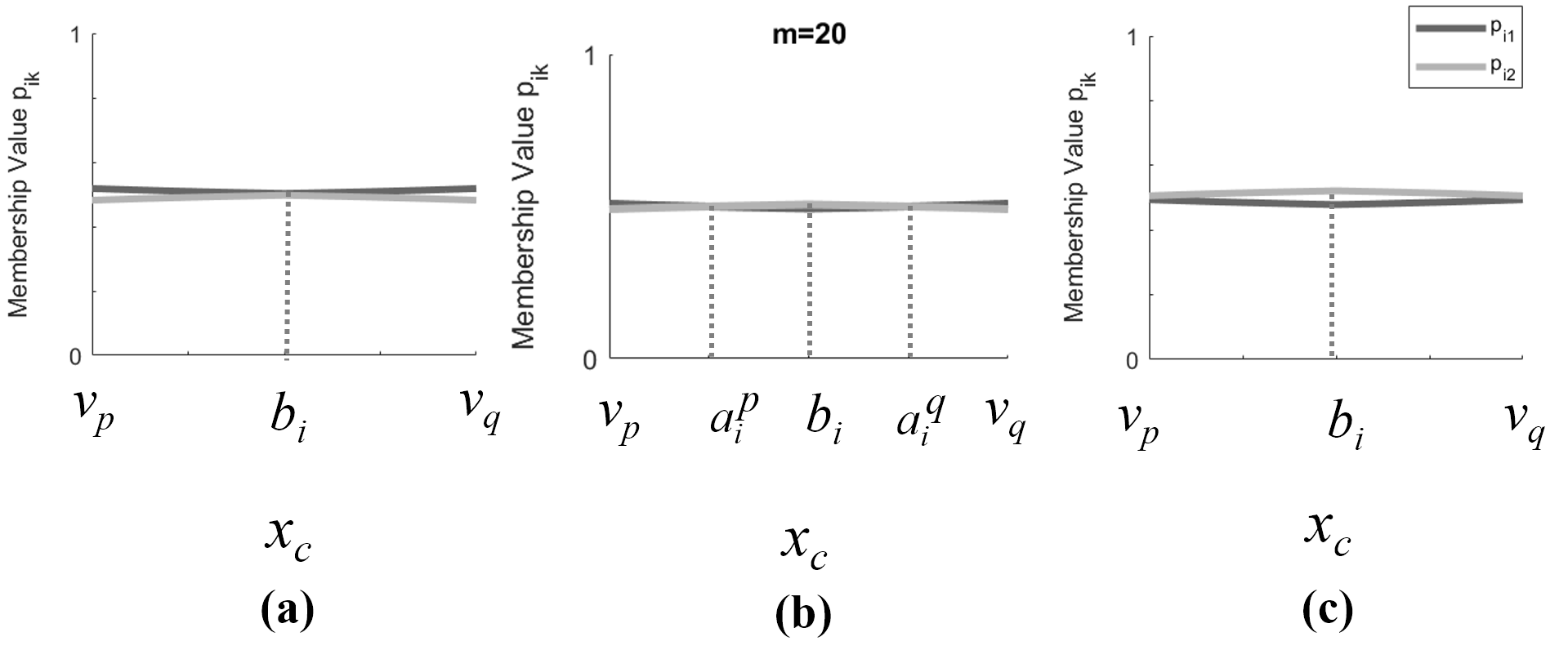}
	\caption{Membership function $p_{ik}$ of \fcs\ with 2 clusters when m=20}
	\label{fig:FCmemplotm20}
\end{figure}

If \eqref{eqn:FmemberforTwo} is substituted in \eqref{eqn:objFuz1}, the objective function will be
\begin{equation*}\label{eqn:Fobj1For2}
f(\textbf{X})=\sum_{i=1}^N{\frac{d(v_i,x_c)^2d(v_i,x_2)^2}{\left(d(v_i,x_c)^\frac{2}{(m-1)}+d(v_i,x_2)^\frac{2}{(m-1)}\right)^{m-1}}}.
\end{equation*}
For three clusters, the membership values are
\begin{align*}
p_{i1}&=\frac{(d(v_i,x_2)d(v_i,x_3))^\frac{2}{(m-1)}}{(d(v_i,x_c)d(v_i,x_2))^\frac{2}{(m-1)}+(d(v_i,x_2)d(v_i,x_3))^\frac{2}{(m-1)}+(d(v_i,x_c)d(v_i,x_3))^\frac{2}{(m-1)}}, \\
p_{i2}&=\frac{(d(v_i,x_c)d(v_i,x_3))^\frac{2}{(m-1)}}{(d(v_i,x_c)d(v_i,x_2))^\frac{2}{(m-1)}+(d(v_i,x_2)d(v_i,x_3))^\frac{2}{(m-1)}+(d(v_i,x_c)d(v_i,x_3))^\frac{2}{(m-1)}}, \\
p_{i3}&=\frac{(d(v_i,x_c)d(v_i,x_2))^\frac{2}{(m-1)}}{(d(v_i,x_c)d(v_i,x_2))^\frac{2}{(m-1)}+(d(v_i,x_2)d(v_i,x_3))^\frac{2}{(m-1)}+(d(v_i,x_c)d(v_i,x_3))^\frac{2}{(m-1)}}.
\end{align*}
With the same manner, the objective function could be written as
\begin{equation*}
f(\textbf{X})=\sum_{i=1}^N{\frac{(d(v_i,x_c)d(v_i,x_2)d(v_i,x_3))^2}{\left((d(v_i,x_c)d(v_i,x_2))^\frac{2}{(m-1)}+(d(v_i,x_2)d(v_i,x_3))^\frac{2}{(m-1)}+(d(v_i,x_c)d(v_i,x_3))^\frac{2}{(m-1)}\right)^{m-1}}}.
\end{equation*}

In the version of the problem with P clusters, the objective function is

\begin{equation}\label{eqn:Fobj1ForP}
f=\sum_{i=1}^N{\frac{\prod_{k=1}^P\,d(v_i,x_k)^2}{\left(\sum_{k=1}^P\,{\prod_{l\neq k}d(v_i,x_l)^\frac{2}{(m-1)}}\right)^{m-1}}}
\end{equation}

For the sake of simplicity, we assume that $m=2$. Since we assume that locations of \xk{k} $\in \mathbf{X} \setminus {x_c}$ are fixed, we could separate constant components of each vertex as $K_i$ and write the objective function \eqref{eqn:Fobj1ForP} as in \eqref{eqn:Fobj1Pcons}.

\begin{align}\label{eqn:Fobj1Pcons}
K_i&=\mathmodify{\frac{\prod_{k=2}^P{d(v_i,x_k)^2}}{\sum_{k=2}^P{\prod_{l\neq k, l \geq 2}d(v_i,x_l)^2}}}\quad\quad \text{and} & f(x_c)&=\sum_{i=1}^N{\frac{d(v_i,x_c)^2K_i}{d(v_i,x_c)^2+K_i}}
\end{align}

$f_i$, contribution of vertex \vi{i} to the function \eqref{eqn:Fobj1Pcons}, is continuous and twice differentiable. First and second order derivatives are 

\begin{align}
\frac{df_i}{dx_c}&=\frac{2K_i^2}{(K_i+d(v_i,x_c)^2)^2} \nonumber \\
\frac{d^2f_i}{dx_c^2}&=\frac{(2K_i^2)(K_i-3d(v_i,x_c)^2)}{(K_i+d(v_i,x_c)^2)^3}. \label{eqn:FCsecder}
\end{align}

With the second derivative test, $f_i$ is

\begin{itemize}
	\item Convex if $\frac{d^2f_i}{dx_c^2} \geq 0$, that is, $d(v_i,x_c) \leq \sqrt{\frac{K_i}{3}}$,
	\item \mathmodify{Concave if $\frac{d^2f_i}{dx_c^2} \leq 0$, that is, $d(v_i,x_c) \ge \sqrt{\frac{K_i}{3}}$.}	
\end{itemize}

\begin{theorem}\label{thm:fcmp}

In \fcp\, with \textit{P} clusters, given a solution \Xs$ =\{x_1,x_2,...,x_P\} $, if $ x_c \in  $ \Xs\,is moved along a given edge keeping the centers \Xs $\setminus \{x_c\}$ fixed, there could be a location on the given edge that minimizes the objective function \eqref{eqn:Fobj1ForP}.

\end{theorem}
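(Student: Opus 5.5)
The statement is existential ("there could be"), so I will prove it by exhibiting an instance in which, with the other centers fixed, the restriction of \eqref{eqn:Fobj1ForP} to the given edge attains its minimum strictly inside the edge. The local structure comes from \eqref{eqn:Fobj1Pcons} with $m=2$. Each component is $f_i=\phi_i(d(v_i,x_c))$ with $\phi_i(t)=t^2K_i/(t^2+K_i)$, so $\phi_i$ is increasing in $t$. By \eqref{eqn:FCsecder}, $\phi_i$ is convex whenever $t\le\sqrt{K_i/3}$. The plan is to arrange that every vertex stays in this convex regime along the whole edge, and that the one-dimensional objective has a stationary point in the interior.

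\textbf{Step 1: convexity along the edge.} Parametrize the edge by $t=d(v_p,x_c)\in[0,l_e]$. By the arc bottleneck discussion, $d(v_i,x_c)$ is piecewise linear in $t$ with slopes $\pm1$. I will first restrict to the case where no vertex has an arc bottleneck point in the interior of the edge; on a line graph with $(v_p,v_q)$ an interior edge this holds automatically. Then each $d(v_i,x_c)$ is affine in $t$. If moreover $d(v_i,x_c)\le\sqrt{K_i/3}$ for all $t$, each $f_i$ is a convex function of an affine map, so $f$ is convex on $[0,l_e]$.

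\textbf{Step 2: the condition $d(v_i,x_c)\le\sqrt{K_i/3}$.} This is the main obstacle. The condition requires $K_i$ to be large, i.e.\ the other centers must be far from every vertex relative to $x_c$. I will take $P=2$ and place $x_2$ at distance $D$ from all relevant vertices, for example at the end of a long pendant path. Then $K_i=d(v_i,x_2)^2\ge D^2$. Choosing $D$ large enough relative to the diameter of the remaining vertex set makes $\sqrt{K_i/3}$ exceed every $d(v_i,x_c)$.

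There is one catch: vertices on the pendant path toward $x_2$ have small $K_i$. I will handle this by giving $x_2$'s side only the single vertex where $x_2$ sits. That vertex has $d=0$ to $x_2$, so $K_i=0$ and its contribution is identically $0$, which is harmless. Alternatively, I can note that the Bezdek membership puts that vertex entirely in cluster 2.

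\textbf{Step 3: an interior stationary point.} With $K_i\approx D^2$ large, $\phi_i(t)\approx t^2$, and $f$ approximates the single-cluster SSC objective. I will take a symmetric instance: a path $v_1-v_p-v_q-v_4$ as in Figure~\ref{fig:graphline4} with $a=b$, and $x_2$ attached far away symmetrically, for example via a long path from the midpoint, or by symmetric weights. By symmetry, $f'(l_e/2)=0$. Combined with the strict convexity from Step 1, the midpoint is the unique minimizer on the edge, and it is strictly interior.

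If symmetry is awkward to realize, I will instead fall back on checking the boundary derivatives directly. With $df_i/dx_c$ from \eqref{eqn:FCsecder} multiplied by the slopes $\pm1$, I need $f'(0)<0<f'(l_e)$. This reduces to the SSC interior condition of Theorem~\ref{thm:ssc1} up to $O(1/D^2)$ corrections, so it holds for large $D$.
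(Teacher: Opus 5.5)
Your proposal is correct once the fallback carries it, and it takes a genuinely different route from the paper.

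\textbf{Comparison with the paper.} The paper argues qualitatively. For general $m$ it writes each $f_i$ as a function of $d(v_i,x_c)$ and uses the second derivative to show $f_i$ is convex below a distance threshold and concave above it. It then concludes that, since $f$ is neither concave nor monotone along the edge, an interior local minimum may occur. That shows the concavity argument behind vertex optimality for PMP and PDC no longer applies. It does not exhibit a configuration in which the minimizer over the edge is actually interior.

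You construct one. Take $P=2$ with $x_2$ at the far end of a long pendant edge. Then every $K_i$ is at least $D^2$, except at the vertex hosting $x_2$, whose term vanishes identically. So the FCM objective and its derivative along the edge converge to those of the single-cluster SSC objective on the remaining vertices, and strict SSC boundary inequalities carry over for large $D$. This certifies the existential claim rather than only removing an obstruction. What you give up is the paper's general-$m$ curvature picture. However, your limit works for every $m>1$: for $P=2$ the component equals $d^2\bigl(1+(d/d(v_i,x_2))^{2/(m-1)}\bigr)^{-(m-1)}$ with $d=d(v_i,x_c)$, and this tends to $d^2$ together with its derivative.

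\textbf{Details to repair.}

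\emph{The symmetric realization.} Attaching $x_2$ ``via a long path from the midpoint'' fails. Branching at the midpoint of $(v_p,v_q)$ turns that point into a vertex and splits the edge. Your own symmetry-plus-convexity argument then places the minimizer exactly at this new vertex. ``Symmetric weights'' are not available either, because the FCM objective carries no vertex weights. Instead, use the fallback: a pendant edge of length $D$ at $v_4$ keeps the graph a path, so no interior arc bottleneck points arise. Alternatively, get genuine symmetry by joining the vertex $w$ hosting $x_2$ to both $v_1$ and $v_4$ by edges of equal length $L$. Then $w$ has an interior arc bottleneck point, but its term is identically zero.

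\emph{The derivative.} Do not take $df_i/dx_c$ from the paper's display. The derivative of $u^2K_i/(u^2+K_i)$ is $2uK_i^2/(u^2+K_i)^2$, and the printed formula lacks the factor $u$. Used literally, your boundary derivatives would tend to $2$ times the difference between the numbers of $p$-side and $q$-side vertices, which is $0$ in your instance. With the correct formula, $\phi_i'(u)=2u+O(u^3/D^2)$.

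\emph{The boundary inequalities.} You need both strict inequalities for the SSC derivative. Theorem~\ref{thm:ssc1} only supplies a one-sided, non-strict condition at $v_p$. On your path with $a=b$, the SSC derivative is $-4l_e$ at $t=0$ and $4l_e$ at $t=l_e$, so both hold.

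\emph{A simplification.} Convexity is not needed in the fallback: $f'(0)<0<f'(l_e)$ together with continuity on the compact edge already forces an interior minimizer. Even the large-$D$ limit can be dropped. Because $\phi_i'(0)=0$, on the path $v_p$--$v_q$--$w$ with $x_2$ at $w$ one has $f'(0)=-\phi_q'(l_e)<0<\phi_p'(l_e)=f'(l_e)$ for every $D>0$.
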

\ipekmodify{\begin{proof}[Proof sketch]
With all centers except $x_c$ fixed, substituting the optimal memberships \eqref{eqn:Foptp} into the objective reduces each component $f_i$ to a univariate function of $d(v_i,x_c)$. Differentiating twice shows that each piece of $f_i$, delimited by arc bottleneck points, is convex below a critical distance threshold and concave beyond it, so $f_i$, and therefore the sum $f$, is in general neither convex nor concave along the edge. Since $f$ is not monotone either, a local minimum can occur at an interior point at which the first derivative vanishes and the second derivative is positive; the concavity argument that forces vertex optimality in \pmp\ and \pdcp\ does not apply. The full derivative analysis is given in ~\ref{app:fcmp}.
\end{proof}}

\section{Discussion}
The theoretical findings presented in this study offer a fresh perspective on clustering and network location problems by revealing structural properties that influence optimal solutions. The results highlight the impact of assignment strategies—whether hard or soft—on where cluster centers should be located within a network. These findings challenge conventional assumptions, particularly the idea that optimal clustering solutions can be freely placed on the network space. Instead, we have shown that in certain cases, such as in the \pdc\ problem, the optimal solution is always restricted to network vertices, while in others, such as the \sscs\ problem, optimal locations may exist on the edges as well.

A key insight of this study is the introduction of assignment bottleneck points, which dictate how assignments shift as cluster centers move along network edges. This property fundamentally changes how clustering solutions should be approached, as it determines regions where abrupt assignment changes occur. Additionally, the verification of vertex optimality in soft assignment clustering problems provides theoretical justification for restricting solution searches to discrete network points rather than treating the network as a continuous space. These properties not only refine our understanding of network clustering but also suggest efficient ways to structure solution algorithms.

From an optimization standpoint, these findings open up new avenues for designing more effective heuristics and metaheuristics. The fact that certain clustering problems always yield optimal solutions at vertices implies that computational effort can be significantly reduced by focusing searches on discrete locations rather than the entire network. For problems where optimal solutions may exist on edges, incorporating assignment bottleneck points into search strategies can lead to improved heuristics that avoid exhaustive enumeration while still capturing high-quality solutions. These theoretical insights provide a foundation for developing practical algorithms that balance efficiency and accuracy, especially in large-scale networks.

The implications of these findings extend beyond theoretical optimization, with direct applicability to real-world problems where clustering decisions must be made on constrained networks. \ipekmodify{In these settings, the relevant notion of distance is precisely the shortest path distance on the underlying network, whether measured in road distance, travel time, or transmission cost, so the framework applies without modification.} In urban planning, these results can inform optimal placement strategies for public service facilities, such as fire stations or hospitals, by ensuring they are located in positions that minimize response times. In telecommunications and sensor networks, where signals must be optimally distributed across a predefined infrastructure, understanding whether solutions should be placed at vertices or along edges can improve network efficiency. Similarly, in supply chain logistics, warehouse and distribution center locations can be determined more effectively by incorporating assignment dynamics into network-based clustering models. The ability to leverage these theoretical properties in real-world settings has the potential to improve decision-making processes in various domains where spatial optimization is critical.

\ipekmodify{The results are likewise independent of the dimensionality of any ambient representation space. Once pairwise dissimilarities are encoded as edge lengths, all subsequent analysis operates on the induced network distance, so the structural properties hold whether the underlying data are two-dimensional coordinates or high-dimensional embeddings. This is particularly relevant for graph-based methods in high-dimensional similarity search, such as navigable small-world graphs~\cite{HNSW}, where the network abstraction is precisely the device used to work around the difficulties of high-dimensional geometry.}

\ipekmodify{These observations acquire practical weight in retrieval pipelines that underpin modern generative systems. Retrieval-augmented generation~\cite{RAG} rests on nearest-neighbor search over embedding collections, which state-of-the-art systems organize as navigable graphs~\cite{HNSW}, and cluster-based partitioning of such collections is a standard indexing device. On an embedding graph, only vertices are realizable center locations, and practitioners restrict candidate centers to data points as a matter of course. Our results qualify this practice in both directions: for distance-based objectives the restriction is provably without loss, by the vertex optimality of \pmp\ and \pdcp, while for squared-distance and soft objectives the optimum may lie strictly inside an edge, so the restriction discards solutions that our bottleneck-point machinery locates. Representative-node selection on knowledge graphs in graph-based retrieval~\cite{GraphRAG} is likewise a median problem on a network. We view a rigorous transfer of these results to large-scale retrieval graphs as a promising direction.}

\ipekmodify{\subsection{Algorithmic implications of bottleneck points}
\label{sec:algimp}
The structural results translate into concrete devices for algorithm design. Fix all centers but one and let the free center move along an edge $e_{pq}$. The arc bottleneck points, of which there are at most $N-2$ on the edge, and the assignment bottleneck points, at most $2(N-2)+2$ by Proposition~\ref{prop:abpbound}, together partition $e_{pq}$ into $O(N)$ subintervals. Within a subinterval, every vertex retains both its cluster assignment and the end vertex through which its shortest path to the moving center passes. All nonsmoothness of the objective is therefore concentrated at these breakpoints.

For \ssc, the objective restricted to a subinterval is a univariate second-degree polynomial, so its minimum over the subinterval is available in closed form, at the unique stationary point or at a subinterval endpoint. The resulting $O(N)$ candidate minimizers per edge constitute a finite dominating set for the single-center subproblem in the sense of Hooker et al.~\cite{FDS1}, and an exact search over one edge reduces to evaluating these candidates, or $O(N|\mathbf{E}|)$ candidates over the whole network. For \fcm, the objective is piecewise only at arc bottleneck points, and its pieces are in general neither convex nor concave, so a local minimum may lie in the interior of an edge, as established in Theorem~\ref{thm:fcmp}. This is precisely why the search cannot be restricted to vertices, and it motivates a subinterval-by-subinterval univariate search, for which derivative-free methods such as golden section search are natural candidates.

The bottleneck points also yield pruning rules for Lloyd-type alternating schemes such as K-Means and K-Median adapted to networks. As long as a center update remains within a subinterval delimited by assignment bottleneck points, no vertex changes its nearest center, so the reassignment step can be skipped or updated incrementally. For \pmp\ and \pdcp, vertex optimality permits a stronger reduction: the search space collapses to vertex set \Vs, turning the continuous problem into a discrete one that K-Medoids-style swap procedures address directly. Given the matrix of shortest path distances between vertices, the bottleneck points of an edge can be computed and sorted in $O(N \log N)$ time, so these reductions are inexpensive relative to the cost of the search they accelerate. We present these observations as directions for algorithm design; developing and benchmarking complete algorithms on these foundations is beyond the structural scope of this paper.}

\ipekmodify{\subsection{Limitations of this study}
Limitations of the present analysis should be stated explicitly. First, all results are derived with respect to the shortest path distance induced by strictly positive edge lengths on an undirected, connected graph. The proofs rely on the metric properties of this distance, in particular the triangle inequality. Dissimilarity measures that violate these properties fall outside the scope of our framework, and directed networks, where distances need not be symmetric, remain an open direction. Second, our contribution is structural rather than algorithmic. We characterize where optimal cluster centers can be located, but we do not report complexity results or computational experiments; the directions in Section~\ref{sec:algimp} are intended as starting points for such work. Third, in the case of soft assignment, the results depend on the specific membership forms of \pdc\ and \fcm ; alternative membership models may behave differently. Lastly, the structural properties we establish, including vertex optimality and the bounds on the number of assignment bottleneck points, hold independently of the relative sizes of clusters. How cluster imbalance affects the practical performance of the algorithms built on these properties is a separate question, which our analysis does not address.}

\section{Conclusion}
This study establishes fundamental theoretical properties that govern clustering problems on networks, leading to a more structured understanding of optimal center placement. The identification of assignment bottleneck points and the distinction between vertex-restricted and edge-admissible solutions provide a new perspective on how clustering problems should be formulated and solved. By demonstrating that probabilistic distance clustering (\pdc) problems always have optimal solutions at vertices, we provide theoretical justification for discrete optimization approaches in these settings. Conversely, by showing that sum of squares clustering (\sscs) solutions may lie on edges, we emphasize the importance of considering a broader search space in these cases.

These findings suggest that traditional plane-based clustering methods, which assume free placement of centers, cannot always be directly applied to networks. Instead, clustering problems in network-constrained environments require careful consideration of assignment dynamics and structural properties. The results of this study can guide the development of more efficient solution approaches, including heuristic and metaheuristic methods that take advantage of these theoretical insights.

Future research should explore algorithmic techniques that leverage the properties identified in this study to develop practical, scalable solutions. Given the broad applicability of clustering problems across transportation, telecommunications, emergency response, and logistics, understanding the fundamental structure of optimal solutions can lead to better decision-making in real-world scenarios. Building on these theoretical results, future studies can bridge the gap between mathematical optimization and practical implementation, ensuring that network-based clustering problems are addressed with both precision and computational efficiency.

\appendix

\ipekmodify{\section{Proof of Theorem~\ref{thm:sscp}}\label{app:sscp}}

	\begin{proof}
		Let $\left \{ v_1^*,v_2^*,...,v_p^* \right \}$ be the set of points in $\mathbf{V^*}$. If these points are rearranged such that
		\begin{align*}
		d(v_{i_j},\mathbf{V^*})=&d(v_{i_j},v_1^*) \quad \forall j=1,...n_1, \\
		d(v_{i_j},\mathbf{V^*})=&d(v_{i_j},v_2^*) \quad \forall j=n_1+1,...n_2,\\
		...&\\
		d(v_{i_j},\mathbf{V^*})=&d(v_{i_j},v_p^*) \quad \forall j=n_{p-1}+1,...n_p, (where\ n_p=N) \\
		\end{align*} 
        the objective function could be written as
		\begin{multline*}
		f=\sum_{j=1}^{n_1}{h_{i_j}d(v_{i_j},v_1^*)}^2+
		\sum_{j=n_1+1}^{n_2}{h_{i_j}d(v_{i_j},v_2^*)}^2+...+
		\sum_{j=n_{p-1}+1}^{n_p}{h_{i_j}d(v_{i_j},v_p^*)}^2
		\end{multline*}
		Let
		\begin{align*}
		f_1=&\sum_{j=1}^{n_1}{h_{i_j}d(v_{i_j},v_1^*)}^2 \\
		f_2=&\sum_{j=n_1+1}^{n_2}{h_{i_j}d(v_{i_j},v_2^*)}^2 \\
		...&\\
		f_p=&\sum_{j=n_{p-1}+1}^{n_p}{h_{i_j}d(v_{i_j},v_p^*)}^2.
		\end{align*}
		Define \mathmodify{$h'_{i_j}=h_{i_j}$} for $j=1,...,n_1$ and $h'_{i_j}=0$ for $j=n_1+1,...,n_p(where\  n_p=N)$, we have
		\begin{equation*}
		f_1=\sum_{j=1}^{N}{h_{i_j}d(v_{i_j},v_1^*)}^2.
		\end{equation*}
		In the previous theorem, we have shown that given a condition, there exists an interior point $x_c$ on an edge adjacent to $v_1^*$ such that
		\begin{equation*}
		f_1 \geq \sum_{j=1}^{N}{h'_{i_j}d(v_{i_j},x_c)}^2,
		\end{equation*}
		which could be written as
		\begin{equation*}
		f_1 \geq \sum_{j=1}^{n_1}{h_{i_j}d(v_{i_j},x_c)}^2.
		\end{equation*}		
		Assume that for cluster centers $\left \{ x_c,x_2,...,x_s\right \}$ this condition is satisfied. Then,	
		\begin{align*}
		f_1 \geq& \sum_{j=1}^{n_1}{h_{i_j}d(v_{i_j},x_c)}^2, \\
		f_2 \geq& \sum_{j=n_1+1}^{n_2}{h_{i_j}d(v_{i_j},x_2)}^2, \\
		...&\\
		f_s \geq& \sum_{j=n_{s-1}+1}^{n_s}{h_{i_j}d(v_{i_j},x_s)}^2. \\
		\end{align*}
    Adding both sides of the inequalities, we have
		\begin{align} \label{eqnDerya}
		f \geq \sum_{j=1}^{n_1}{h_{i_j}d(v_{i_j},x_c)}^2+
		&\sum_{j=n_1+1}^{n_2}{h_{i_j}d(v_{i_j},x_2)}^2+...+
		\sum_{j=n_{s-1}+1}^{n_s}{h_{i_j}d(v_{i_j},x_s)}^2 \nonumber\\
		&+\sum_{j=n_{s}+1}^{n_{s+1}}{h_{i_j}d(v_{i_j},v_{s+1}^*)}^2+
		\sum_{j=n_{s+1}+1}^{n_{s+2}}{h_{i_j}d(v_{i_j},v_{s+2}^*)}^2 \nonumber\\
		&+...+\sum_{j=n_{p-1}+1}^{n_{p}}{h_{i_j}d(v_{i_j},v_{p}^*)}^2
		\end{align}
		
		Let the new set of centers $X=\left \{ x_c,x_2,...,x_s,v^*_{s+1},v^*_{s+2},...,v^*_{P} \right \}$. After changing locations of cluster centers, assignments of vertices to centers may change. Therefore, we have
		
		\begin{align} \label{eqnIpek}
		\sum_{j=1}^{n_1}{h_{i_j}d(v_{i_j},x_c)}^2+
		&\sum_{j=n_1+1}^{n_2}{h_{i_j}d(v_{i_j},x_2)}^2+...+
		\sum_{j=n_{s-1}+1}^{n_s}{h_{i_j}d(v_{i_j},x_s)}^2 \nonumber \\
		&+\sum_{j=n_{s}+1}^{n_{s+1}}{h_{i_j}d(v_{i_j},v_{s+1}^*)}^2+
		\sum_{j=n_{s+1}+1}^{n_{s+2}}{h_{i_j}d(v_{i_j},v_{s+2}^*)}^2 \nonumber\\
		&+...+\sum_{j=n_{p-1}+1}^{n_{p}}{h_{i_j}d(v_{i_j},v_{p}^*)}^2 \geq \sum_{j=1}^{N}{h_{i}d(v_{i},\mathmodify{\mathbf{X^*}})}^2.
		\end{align}
		Combining \eqref{eqnDerya} and \eqref{eqnIpek}, we have
		\begin{equation*}
		\sum_{i\in \mathbf{V}}{h_{i}d(v_{i},\mathbf{V^*})}^2 \geq
		\sum_{i\in \mathbf{V}}{h_{i}d(v_{i},\mathmodify{\mathbf{X^*}})}^2.
		\end{equation*}
	\end{proof}

\ipekmodify{\section{Proof of Theorems~\ref{thm:pdcp} and ~\ref{thm:pdcpall}}\label{app:pdcp}}

\begin{proof}Proof of Theorem~\ref{thm:pdcp}

	To prove this theorem, it will be shown that keeping \xk{k} $\in \mathbf{X}\setminus\{x_c\}$ fixed, $x_c$ will always be located on a vertex. 
	Let $y_0$ be an arbitrary point on the edge $(v_p,v_q) \in$ \Es\ and $y_0 \notin V$. There exists a vertex $v_m$ such that
	\begin{equation*}
	\sum_{i=1}^N{\frac{d(v_i,y_0)K_i}{d(v_i,y_0)+K_i}} \geq
	\sum_{i=1}^N{\frac{d(v_i,v_m)K_i}{d(v_i,v_m)+K_i}}.
	\end{equation*}
	We know that
	\begin{equation*}
	d(v_i,y_0)=\min \left \{ d(v_i,v_p)+d(v_p,y_0),d(v_i,v_q)+d(v_q,y_0) \right \}.
	\end{equation*}
	Assume that the set of points have been arranged such that
	\begin{align*}
	d(v_{i_j},y_0)&= d(v_{i_j},v_p)+d(v_p,y_0), for  \ j=1,...,r, \\
	d(v_{i_j},y_0)&= d(v_{i_j},v_q)+d(v_q,y_0), for  \ j=r+1,...,N.
	\end{align*}
	Then, the objective function could be written as
	\begin{multline}\label{ObjProof1}
	\sum_{j=1}^N{\frac{d(v_{i_j},y_0)K_{i_j}}{d(v_{i_j},y_0)+K_{i_j}}}=\sum_{j=1}^r
	{\frac{(d(v_{i_j},v_p)+d(v_p,y_0))K_{i_j}}{d(v_{i_j},v_p)+d(v_p,y_0)+K_{i_j}}} \\
	+\sum_{j=r+1}^N
	{\frac{(d(v_{i_j},v_q)+d(v_q,y_0))K_{i_j}}{d(v_{i_j},v_q)+d(v_q,y_0)+K_{i_j}}}.
	\end{multline}
	Since we have two vertices $v_p$ and $v_q$ connected by the edge which contains $y_0$, either $v_p$ or $v_q$ is a better solution. We will consider two cases each implying that one of the vertices is a better solution.\\
	
	Substitute $d(v_q,y_0)=d(v_p,v_q)-d(v_p,y_0)$ the objective function in \eqref{ObjProof1} is
	\begin{multline*}
	f=\sum_{j=1}^N{\frac{d(v_{i_j},y_0)K_{i_j}}{d(v_{i_j},y_0)+K_{i_j}}}=\sum_{j=1}^r
	{\frac{(d(v_{i_j},v_p)+d(v_p,y_0))K_{i_j}}{d(v_{i_j},v_p)+d(v_p,y_0)+K_{i_j}}} \\
	+\sum_{j=r+1}^N
	{\frac{(d(v_{i_j},v_q)+d(v_p,v_q)-d(v_p,y_0))K_{i_j}}{d(v_{i_j},v_q)+d(v_p,v_q)-d(v_p,y_0)+K_{i_j}}}.
	\end{multline*}
	Let \begin{equation*}
	f=f_1+f_2,
	\end{equation*}where
	\begin{align}
	f_1=&\sum_{j=1}^r{\frac{(d(v_{i_j},v_p)+d(v_p,y_0))K_{i_j}}{d(v_{i_j},v_p)+d(v_p,y_0)+K_{i_j}}}, \label{z1} \\
	f_2=&\sum_{j=r+1}^N
	{\frac{(d(v_{i_j},v_q)+d(v_p,v_q)-d(v_p,y_0))K_{i_j}}{d(v_{i_j},v_q)+d(v_p,v_q)-d(v_p,y_0)+K_{i_j}}}. \label{z2}
	\end{align}
	Multiply both the numerator and denominator of each term of the summation of $f_1$ in \eqref{z1} by $d(v_{i_j},v_p)+K_{i_j}$, then
	\begin{align*}
	f_1=&\sum_{j=1}^r \left[ {\frac{d(v_{i_j},v_p)K_{i_j}}{d(v_{i_j},v_p)+K_{i_j}}+\frac{d(v_p,y_0)K_{i_j}^2}{(d(v_{i_j},v_p)+d(v_p,y_0)+K_{i_j})(d(v_{i_j},v_p)+K_{i_j})}} \right].
	\end{align*}
	Similarly, multiply both the numerator and denominator of each term of the summation of $f_2$ in \eqref{z2} by $(d(v_{i_j},v_q)+d(v_p,v_q)+K_{i_j})$, then
	\begin{multline}\label{z2Sep}
	f_2=\sum_{j=r+1}^N \left[ 
	{\frac{(d(v_{i_j},v_q)+d(v_p,v_q))K_{i_j}}{d(v_{i_j},v_q)+d(v_p,v_q)+K_{i_j}}} \right. \\  - \left. {\frac{d(v_p,y_0)K_{i_j}^2}{(d(v_{i_j},v_q)+d(v_p,v_q)-d(v_p,y_0)+K_{i_j})(d(v_{i_j},v_q)+d(v_p,v_q)+K_{i_j})}} \right].
	\end{multline}
	By triangle inequality, we have $d(v_{i_j},v_q)+d(v_p,v_q) \geq d(v_{i_j},v_p)$. Then
	\begin{equation} \label{eqRel}
	\sum_{j=r+1}^N  
	{\frac{(d(v_{i_j},v_q)+d(v_p,v_q))K_{i_j}}{d(v_{i_j},v_q)+d(v_p,v_q)+K_{i_j}}} \geq
	\sum_{j=r+1}^N \frac{d(v_{i_j},v_p)K_{i_j}}{d(v_{i_j},v_p)+K_{i_j}}.
	\end{equation}
	Substitute right-hand side of \eqref{eqRel} in \eqref{z2Sep}, we have
	\begin{multline} \label{z2prime}
	f_2 \geq f_2'=\sum_{j=r+1}^N \left[ \frac{d(v_{i_j},v_p)K_{i_j}}{d(v_{i_j},v_p)+K_{i_j}} \right. \\ 
	-\left. {\frac{d(v_p,y_0)K_{i_j}^2}{(d(v_{i_j},v_q)+d(v_p,v_q)-d(v_p,y_0)+K_{i_j})(d(v_{i_j},v_q)+d(v_p,v_q)+K_{i_j})}} \right].
	\end{multline}
	Since $d(v_{i_j},v_q)+d(v_p,v_q) \geq d(v_{i_j},v_q)$, replace $d(v_{i_j},v_q)+d(v_p,v_q)$ in \eqref{z2prime} with $d(v_{i_j},v_q)$, then
	\begin{multline*} 
	f_2' \geq f_2''=\sum_{j=r+1}^N \left[ \frac{d(v_{i_j},v_p)K_{i_j}}{d(v_{i_j},v_p)+K_{i_j}} \right. \\ 
	-\left. {\frac{d(v_p,y_0)K_{i_j}^2}{(d(v_{i_j},v_q)+d(v_p,v_q)-d(v_p,y_0)+K_{i_j})(d(v_{i_j},v_q)+K_{i_j})}} \right].
	\end{multline*}
	
	Hence, 
	\begin{equation}\label{zprime}
	f \geq f_1+f_2''
	\end{equation}
	If \eqref{zprime} is rearranged, then
	\begin{align}
	f \geq & \sum_{j=1}^N \frac{d(v_{i_j},v_p)K_{i_j}}{d(v_{i_j},v_p)+K_{i_j}} \label{objPP} \\
	& +d(v_p,y_0) \left[ \sum_{j=1}^r \frac{K_{i_j}^2}{(d(v_{i_j},v_p)+d(v_p,y_0)+K_{i_j})(d(v_{i_j},v_p)+K_{i_j})} \right. \label{objp21} \\
	& - \left. \sum_{j=r+1}^N \frac{K_{i_j}^2}{(d(v_{i_j},v_q)+d(v_p,v_q)-d(v_p,y_0)+K_{i_j})(d(v_{i_j},v_q)+K_{i_j})} \right]. \label{objp22}
	\end{align}
	Summations in \eqref{objp21} and \eqref{objp22} are equal to \eqref{memP1} and \eqref{memP2}, respectively.
	\begin{align}
	\sum_{j=1}^r{p_{i_{j}y_0}p_{i_{j}v_p}}\label{memP1} \\  \sum_{j=r+1}^N{p_{i_{j}y_0}p_{i_{j}v_q}}\label{memP2}
	\end{align}
	If \eqref{Case1} is satisfied, \eqref{pbetter} will hold true. That is, $v_p$ is a better location than $y_0$ for $x_c$. 
	\begin{equation}\label{Case1}
	\sum_{j=1}^r{p_{i_{j}y_0}p_{i_{j}v_p}} \geq \sum_{j=r+1}^N{p_{i_{j}y_0}p_{i_{j}v_q}}
	\end{equation}
	\begin{equation}\label{pbetter}
	\sum_{j=1}^N{\frac{d(v_{i_j},y_0)K_{i_j}}{d(v_{i_j},y_0)+K_{i_j}}} \geq
	\sum_{j=1}^N{\frac{d(v_{i_j},v_p)K_{i_j}}{d(v_{i_j},v_p)+K_{i_j}}}
	\end{equation}
	If \eqref{Case1} is not satisfied, then we have
	\begin{equation}\label{Case2}
	\sum_{j=1}^r{p_{iy_0}p_{iv_p}} < \sum_{j=r+1}^N{p_{iy_0}p_{iv_q}}
	\end{equation}
	Clearly, \eqref{pbetter} is not guaranteed in this case.\\
	Again, let 
	\begin{equation*}
	f=f_1+f_2
	\end{equation*}
	Add and subtract $d(v_p,v_q)$ both numerators and denominators of each term of summation of $f_1$ in \eqref{z1}, then
	\begin{align}
	f_1=&\sum_{j=1}^r{\frac{(d(v_{i_j},v_p)+d(v_p,y_0)+d(v_p,v_q)-d(v_p,v_q))K_{i_j}}{d(v_{i_j},v_p)+d(v_p,y_0)+d(v_p,v_q)-d(v_p,v_q)+K_{i_j}}}, \label{z1-2} 
	\end{align}
	Multiply both the numerator and denominator of each term of the summation of $f_1$ in \eqref{z1-2} by $d(v_{i_j},v_p)+d(v_p,v_q)+K_{i_j}$, then
	\begin{align*}
	f_1=&\sum_{j=1}^r \left[ {\frac{(d(v_{i_j},v_p)+d(v_p,v_q))K_{i_j}}{d(v_{i_j},v_p)+d(v_p,v_q)+K_{i_j}}} \right. \\
	-&\left.\frac{(d(v_p,v_q)-d(v_p,y_0))K_{i_j}^2}{(d(v_{i_j},v_p)+d(v_p,v_q)-d(v_p,v_q)+d(v_p,y_0)+K_{i_j})(d(v_{i_j},v_p)+d(v_p,v_q)+K_{i_j})} \right].
	\end{align*}
	Cancelling the $d(v_p,v_q)-d(v_p,v_q)$ expression, we have
	\begin{align}\label{z1Sep2}
	f_1=&\sum_{j=1}^r \left[ {\frac{(d(v_{i_j},v_p)+d(v_p,v_q))K_{i_j}}{d(v_{i_j},v_p)+d(v_p,v_q)+K_{i_j}}} \right. \\
	-&\left.\frac{(d(v_p,v_q)-d(v_p,y_0))K_{i_j}^2}{(d(v_{i_j},v_p)+d(v_p,y_0)+K_{i_j})(d(v_{i_j},v_p)+d(v_p,v_q)+K_{i_j})} \right].
	\end{align}
	Similarly, multiply both the numerator and denominator of each term of the summation of $f_2$ in \eqref{z2} by $(d(v_{i_j},v_q)+K_{i_j})$, then
	\begin{multline*}
	f_2=\sum_{j=r+1}^N \left[ 
	{\frac{d(v_{i_j},v_q)K_{i_j}}{d(v_{i_j},v_q)+K_{i_j}}} \right. \\+ \left.{\frac{((d(v_p,v_q)-d(v_p,y_0))K_{i_j}^2}{(d(v_{i_j},v_q)+d(v_p,v_q)-d(v_p,y_0)+K_{i_j})(d(v_{i_j},v_q)+K_{i_j})}} \right].
	\end{multline*}
	By triangle inequality, we have $d(v_{i_j},v_p)+d(v_p,v_q) \geq d(v_{i_j},v_q)$. Then
	\begin{equation} \label{eqRel2}
	\sum_{j=1}^r  
	{\frac{(d(v_{i_j},v_p)+d(v_p,v_q))K_{i_j}}{d(v_{i_j},v_p)+d(v_p,v_q)+K_{i_j}}} \geq
	\sum_{j=1}^r \frac{d(v_{i_j},v_q)K_{i_j}}{d(v_{i_j},v_q)+K_{i_j}}.
	\end{equation}
	Substitute right-hand side of \eqref{eqRel2} in \eqref{z1Sep2}, we have
	\begin{multline} \label{z1prime}
	f_1 \geq f_1'=\sum_{j=1}^r \left[ \frac{d(v_{i_j},v_q)K_{i_j}}{d(v_{i_j},v_q)+K_{i_j}} \right. \\ 
	-\left.\frac{(d(v_p,v_q)-d(v_p,y_0))K_{i_j}^2}{(d(v_{i_j},v_p)+d(v_p,y_0)+K_{i_j})(d(v_{i_j},v_p)+d(v_p,v_q)+K_{i_j})} \right].
	\end{multline}
	Since $d(v_{i_j},v_p)+d(v_p,v_q) \geq d(v_{i_j},v_p)$, replace $d(v_{i_j},v_p)+d(v_p,v_q)$ in \eqref{z1prime} with $d(v_{i_j},v_p)$, then
	\begin{equation*} 
	f_1' \geq f_1''=\sum_{j=1}^r \left[ \frac{d(v_{i_j},v_q)K_{i_j}}{d(v_{i_j},v_q)+K_{i_j}} \right. \\ 
	-\left.\frac{(d(v_p,v_q)-d(v_p,y_0))K_{i_j}^2}{(d(v_{i_j},v_p)+d(v_p,y_0)+K_{i_j})(d(v_{i_j},v_p)+K_{i_j})} \right].
	\end{equation*}
	Hence, 
	\begin{equation}\label{zprime2}
	f \geq f_1''+f_2
	\end{equation}
	If \eqref{zprime2} is rearranged, then
	\begin{align}
	f \geq & \sum_{j=1}^N \frac{d(v_{i_j},v_q)K_{i_j}}{d(v_{i_j},v_q)+K_{i_j}}\nonumber \\
	& +(d(v_p,v_q)-d(v_p,y_0)) \\ &*\left[ \sum_{j=r+1}^N \frac{K_{i_j}^2}{(d(v_{i_j},v_q)+d(v_p,v_q)-d(v_p,y_0)+K_{i_j})(d(v_{i_j},v_q)+K_{i_j})} \right. \label{objq21} \\
	& - \left. \sum_{j=1}^r \frac{K_{i_j}^2}{(d(v_{i_j},v_p)+d(v_p,y_0)+K_{i_j})(d(v_{i_j},v_p)+K_{i_j})} \right]. \label{objq22}
	\end{align}
	\mathmodify{Summations in \eqref{objq21} and \eqref{objq22} are equal to \eqref{memP2} and \eqref{memP1}, respectively.}
	If \eqref{Case2} is satisfied, \eqref{qbetter} will hold true. $v_q$ is a better location than $y_0$ for $x_c$.
	\begin{equation}\label{qbetter}
	\sum_{i=1}^N{\frac{d(v_i,y_0)K_i}{d(v_{i_j},y_0)+K_i}} \geq
	\sum_{i=1}^N{\frac{d(v_i,v_q)K_i}{d(v_{i_j},v_q)+K_i}}
	\end{equation}
	As shown above, when \eqref{Case1} is satisfied. $v_p$ is a better location than $y_0$ for $x_c$. Otherwise, (when \eqref{Case2} is satisfied), $v_q$ is a better location than $y_0$ for $x_c$. As a result, the center $x_c$ will always be located on a vertex.
\end{proof}

	\begin{proof}Proof of Theorem~\ref{thm:pdcpall}
    
		To prove this theorem, we will change location of a center $x_i$, by fixing the other centers ($x_k, k \neq i$). In each location change, membership scores will change. Initial membership score will be denoted as $p_{ik}^{(0)}$. Updated membership scores resulting from changing location of cluster center $l$ will be denoted as $p_{ik}^{(l)}$.
		
		Let $x_c$ be the cluster center to be changed, keeping the others fixed. In the previous theorem, we have shown that
		\begin{multline} \label{eqfor1}
		\sum_{k=1}^P{\sum_{i=1}^N{p_{ik}^{(0)^2}d(v_i,x_k)}} \geq  {\sum_{i=1}^N{p_{ik}^{(1)^2}d(v_i,v_{m_1})}} + \sum_{k=2}^P{\sum_{i=1}^N{p_{ik}^{(1)^2}d(v_i,x_k)}}.
		\end{multline} 
		Now, let $x_2$ be the cluster center to be changed, keeping the others fixed in their last locations. Again, we have
		\begin{multline} \label{eqfor2}
		{\sum_{i=1}^N{p_{ik}^{(1)^2}d(v_i,v_{m_1})}} + \sum_{k=2}^P{\sum_{i=1}^N{p_{ik}^{(1)^2}d(v_i,x_k)}}\geq \\ \sum_{k=1}^2{\sum_{i=1}^N{p_{ik}^{(2)^2}d(v_i,v_{m_k})}} + \sum_{k=3}^P{\sum_{i=1}^N{p_{ik}^{(2)^2}d(v_i,x_k)}}.
		\end{multline}
		Perform this process with each \xk{k} as the center location to be changed, in the last expression, we have
		\begin{multline} \label{eqforP}
		\sum_{k=1}^{P-1}{\sum_{i=1}^N{p_{ik}^{(P-1)^2}d(v_i,v_{m_k})}} + {\sum_{i=1}^N{p_{ik}^{(P-1)^2}d(v_i,x_p)}}\geq \sum_{k=1}^P{\sum_{i=1}^N{p_{ik}^{(P)^2}d(v_i,v_{m_k})}}.
		\end{multline}
		Combine \eqref{eqfor1}, \eqref{eqfor2} and \eqref{eqforP}, we have
		\begin{equation*}
		\sum_{k=1}^P{\sum_{i=1}^N{p_{ik}^{(0)^2}d(v_i,x_k)}} \geq
		\sum_{k=1}^P{\sum_{i=1}^N{p_{ik}^{(P)^2}d(v_i,v_{m_k})}},
		\end{equation*}
		which implies that as center locations, $v_{m_1},...,v_{m_P}$ lead to a better solution than $x_1,...,x_P$.
	\end{proof}

\ipekmodify{\section{Proof of Theorem~\ref{thm:fcmp}}\label{app:fcmp}}

\begin{proof}
Let 
\begin{align*}
z_i&=\prod_{k=2}^P{d(v_i,x_k)}\\
t_i&=\mathmodify{\sum_{k=2}^{P} \prod_{\substack{j=2 \\ j\neq k}}^P\,{d(v_i,x_j)^{\frac{2}{m-1}}}}.
\end{align*}

If we fix locations of \xk{k} $\in \mathbf{X}\setminus{}\{x_c\}$ and separate fixed components of $f_i$ from variable components by using $z_i$ and $t_i$, the objective function is \eqref{eqn:fncFuzzyCT}.
\begin{equation}\label{eqn:fncFuzzyCT}
f_i=\mathmodify{\frac{z_i^2d(v_i,x_c)^2}{\left(d(v_i,x_c)^{\frac{2}{m-1}}t_i+z_i^{\frac{2}{m-1}}\right)^{m-1}}}
\end{equation}

If we calculate first and second order derivatives for $f_i$ with any $m>1$, we have

\begin{align}
\frac{df_i}{dx_c}&=\frac{(2z_i^2d(v_i,x_c))}{(z_i+t_id(v_i,x_c)^\frac{2}{m-1})^m} \label{eqn:FCfirder} \\
\frac{d^2f_i}{dx_c^2}&=\frac{(2z_i^2)\left(z_i(m-1)-t_i(m+1)d(v_i,x_c)^\frac{2}{m-1}\right)}{\left(\left(z_i+t_id(v_i,x_c)^\frac{2}{m-1}\right)(m-1)\right)^{m+1}} \label{eqn:FCsecderM}
\end{align}

By the second derivative test, $f_i$ is

\begin{itemize}
	\item \mathmodify{Convex if $\frac{d^2f_i}{dx_c^2} \geq 0$, that is, $d(v_i,x_c) \leq {\left(\frac{z_i(m-1)}{t_i(m+1)}\right)}^{\frac{m-1}{2}}$,}
	\item \mathmodify{Concave if $\frac{d^2f_i}{dx_c^2} < 0$, that is, $d(v_i,x_c) > {\left(\frac{z_i(m-1)}{t_i(m+1)}\right)}^{\frac{m-1}{2}}$.}	
\end{itemize}

As a result, $ f_i $ is a nonconvex function of $d(v_i,x_k)$, and increasing with distance. The objective function could be illustrated as in Figure \ref{fig:FCpObj}. As in \pdc, $f_i$ is piecewise at arc bottleneck points. And each piece of $f_i$ is nonconvex according to the sign of second derivative \eqref{eqn:FCsecderM}. Since summation of nonconvex functions are nonconvex, $f$, summation of $f_i \, \forall \, i=1,...,N$, is nonconvex. Since $f$ is not monotone, local minimum could be found at a point where second derivative is positive and first derivative is zero.

Let \Gr\ be a graph with \textit{P} clusters. Keeping \textit{P-1} cluster centers fixed and moving one cluster center (let us say \xk{c}) along the edge $(v_p,v_q)$, the function $f_i$ could be observed as given in Figure \ref{fig:FCpObj}. In (a) and (c), the shortest path from \vi{i} to \xk{c} passes from $v_p$ and $v_q$, respectively. In (b), when $x_c \in [v_p,b_i]$, $f_i$ increases with the distance $d(v_i,x_c)$; when $x_c \in [b_i,v_q]$, $f_i$ decreases with the decreasing distance. As in \pdcp, piecewiseness occurs only at arc bottleneck points. \mathmodify{However, unlike \pdcp, each piece of $f_i$ is not concave: by the second derivative test in \eqref{eqn:FCsecderM}, $f_i$ is convex for $d(v_i,x_c) \leq \left(\frac{z_i(m-1)}{t_i(m+1)}\right)^{\frac{m-1}{2}}$ and concave beyond this threshold. Hence each piece of $f_i$, and therefore the summation $f=\sum_{i=1}^N f_i$, is in general neither convex nor concave. Since $f$ is nonconvex and not monotone along the edge, a local minimum may occur at an interior point $x_c^* \in (v_p,v_q)$ at which the first derivative of $f$ vanishes and the second derivative is positive. Consequently, the concavity argument that forces vertex optimality in \pmp\ and \pdcp\ does not apply, and the optimal solution of \fcp\ may contain cluster centers located at interior points of edges.}

\begin{figure}[htbp]
	\centering
	\includegraphics[width=1\linewidth]{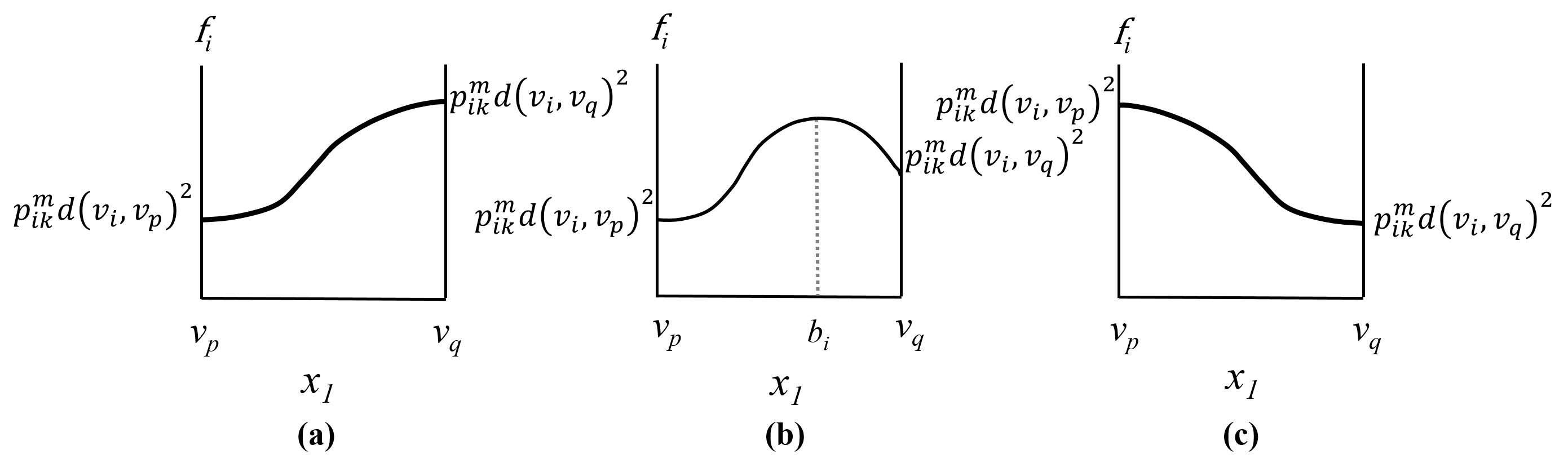}
	\caption{Objective function component for \vi{i} (denoted as $f_i$) in \fcs\ problem with P clusters when \xk{c} is moved along the edge $(v_p,v_q)$ and \xk{k} is the second closest cluster center to \vi{i}}
	\label{fig:FCpObj}
\end{figure}

Hence, in \fcp\, one may find an optimal solution that contains cluster centers located at edges.

\end{proof}

\bibliographystyle{elsarticle-num} 
\bibliography{cas-refs}

\section*{Funding}
No funding was received for conducting this study. This article does not contain any studies with human participants performed by any of the authors.

\section*{Corresponding Author}
Correspondence to Derya Ipek Eroglu

\section*{Conflict of Interest}
The authors declare that they have no conflict of interest and all authors approve this manuscript for publication. 





\end{document}